\newtheorem{theorem}{Theorem}[section]
\newtheorem{corollary}{Corollary}[section]
\newtheorem{lemma}{Lemma}[section]
\begin{document}

\numberwithin{equation}{section}

\title{The $2$-adic Analysis of Stirling Numbers of the Second
Kind via Higher Order Bernoulli Numbers and Polynomials}

\author{Arnold Adelberg\\
Myra Steele Emeritus Professor of Mathematics\\
Department of Mathematics and Statistics\\
Grinnell College\\
Grinnell, IA 50112}

\maketitle

\begin{abstract}
Several new estimates for the $2$-adic valuations of Stirling numbers of the 
second kind are proved. These estimates, together with criteria for when they
are sharp, lead to improvements in several known theorems and their
proofs, as well as to new theorems. The estimates and criteria all depend on
our previous analysis of powers of $2$ in the denominators of coefficients of
higher order Bernoulli polynomials. The corresponding estimates for Stirling
numbers of the first kind are also proved.

Some attention is given to asymptotic cases, which will be further explored in
subsequent publications.

Keywords: Stirling numbers of the second kind, $2$-adic analysis, higher order 
Bernoulli numbers and polynomials, estimates and exact values, Newton polygons. 
\textit{MSC[2010]}: 11B68, 11B73, 05A10, 11S05.
\end{abstract}

\section{Introduction}

This paper brings together and extends a collection of related results on the $2$-adic
analysis of Stirling numbers of the second kind. We hope that our approach, based on
our earlier results for higher order Bernoulli numbers and polynomials, provides a
coherent theoretical basis that others will find useful for further investigations.
The proofs we give for known results are shorter and simpler, often dramatically so.
The results themselves are typically sharper and broader. We also get some new
results, most of which involve new estimates that are stronger than those in the
literature.

The current paper is a continuation of [4] but is quite different in its goals and scope. 
Whereas the previous paper considered all primes and Stirling numbers of both kinds,
for reasons of brevity and focus this paper will primarily consider only the even
prime and will concentrate on Stirling numbers of the second kind $S(n,k)$.

Lengyel [11] proved in 1994 that $\nu_2(S(2^h,k)) = \sigma_2(k) -1$, if $h$ is sufficiently 
large and $k>0$, and conjectured that this formula holds whenever $1 \le k \le 2^h$,
where $\sigma_2(k) =$ sum of base $2$ digits. This was eventually proven in 2005 by
De Wannemaker [7]. Subsequently Lengyel [13] gave another proof and 
adapted De Wannemaker's 
proof to extend the theorem to $\nu_2(S(c2^h,k))=\sigma_2(k)-1$ if $c \ge 1$ and
$1 \le k \le 2^h$.

We found a much simpler proof of De Wannemaker's Theorem in [4], which we were
able to generalize to odd primes and to minimum zero cases (MZC), which are based
on the estimate 
\begin{align}
\nu_2(S(n,k)) \ge \sigma_2(k) -\sigma_2(n)
\end{align}
which we call the minimum zero estimate. When it is sharp, we have the minimum
zero case (MZC).

In the current paper, we give several other useful estimates. One, which is based on
recursive properties of Stirling polynomials, is
\begin{align}
\nu_2(S(n,k)) \ge \sigma_2(k-1)-\sigma_2(n-1)
\end{align}
which we call the shifted minimum zero estimate. When this is sharp, we have the
shifted minimum zero case (SMZC).

Significantly better than these estimates are our new almost minimum zero estimate
\begin{align}
\nu_2(S(n,k)) \ge \sigma_2(k) -\sigma_2(n) + \#(\text{common 2-powers in~} n
 \text{~and~} n-k)
\end{align}
and our new shifted almost minimum zero estimate
\begin{align}
\nu_2(S(n,k)) \ge \sigma_2(k-1)-\sigma_2(n-1) +\#(\text{common 2-powers in~} n-1
 \text{~and~} n-k)
\end{align}

An almost minimum zero case (AMZC) is one where the estimate (1.3) is sharp, but which
is not a MZC, while a shifted almost minimum zero case (SAMZC) is one which is not
a SMZC and the estimate (1.4) is sharp. When the distinction between MZC and AMZC
is unimportant, we may use AMZC for the sharp almost minimum zero estimate. We
may also adopt the analogous convention for SAMZC.

Unlike the minimum zero and shifted minimum zero estimates, these estimates are never
vacuous (negative). This leads to very simple new necessary and sufficient conditions for
when a Stirling number $S(n,k)$ is odd. [Theorem 3.2]

Most of the significant analysis of this paper rests on the fact that since $B_{n-k}^{(-k)}$
is a cofficient of $B_{n-k}^{(-k)}(x)$ the $2$-adic pole of $B_{n-k}^{(-k)}$, i.e., the 
highest power of $2$ in its denominator, is less than or equal to the maximum pole of
$B_{n-k}^{(-k)}(x)$, which is the highest power of $2$ in the denominators of
all the coefficients. We have a
simple formula for this maximum pole (cf. [1,2]), which is given in the Appendix.

The geometry of these cases is instructive: The MZC occurs when the Newton polygon
of $B_{n-k}^{(-k)}(x)$ is strictly decreasing; the SMZC occurs when the Newton
polygon of $B_{n-k}^{(-k+1)}(x)$  is strictly decreasing; the AMZC occurs when the 
Newton polygon of $B_{n-k}^{(-k)}(x)$ is weakly decreasing, i.e., the last segment
of the Newton polygon is horizontal; the SAMZC  holds when the pole of 
$B_{n-k}^{(-k+1)}(1)$ is the maximum pole of $B_{n-k}^{(-k+1)}(x)$ , but this pole 
also occurs in at least one coefficient other than the constant coefficient.  

In our study of the literature, we have found that every significant estimate or exact value 
of $\nu_2(S(n,k))$ we considered arises from one of our estimates or cases.
For example, the proofs in ([9], Theorems 1.1, 1.2, 1.3) are very lengthy and
highly technical, while ours are much shorter and more efficient.

Also, in [12] Lengyel gives many proofs of estimates for $\nu_2(S(c2^h,2^h+a))$,
which we handle easily by our methods with far less computation. He also gives
estimates for $\nu_2(S(c2^h+u,k))$ which are not as good as our almost
minimum zero estimates (unless $u$ is a power of $2$), and his proofs are more
involved than ours.

The organization of this paper is as follows: Section 2 states a number of elementary,
useful facts about base two arithmetic, gives some basic definitions, and states the
main theorems of this paper. Our statements include the estimate or case that leads to
our proof, since that provides insight into why the theorems are true.
Section 3 provides simple, effective criteria for the four cases,
establishes certain invariance properties for these estimates and cases,
and proves a couple of new theorems. Included in this section are new necessary and
sufficient conditions for the Stirling number $S(n,k)$ to be odd, which generalize our
conditions for the central Stirling numbers $S(2k,k)$ (cf. [4]). We also state,
for reference, the
estimates and cases for Stirling numbers of the first kind. Section 4 proves the
main non-asymptotic theorems. Section 5 proves an illustrative asymptotic theorem,
which is more simply stated and with an exponentially better estimate for when
the limit is attained than in the literature. Our proof in this section does not depend
on ths estimates given in the Introduction but depends instead on a new estimate for the
partition dependent terms, which is given in the Appendix [Theorem 6.1]. 
Section 6 collects the material
on higher order Bernoulli numbers and polynomials needed for this paper.

\section{Base two preliminaries, definitions, and statement of main theorems}

Since we deal only with the prime two in this paper, we will omit the prime in our 
notations; e.g., we will write $\nu$ instead of $\nu_2$ for the $2$-adic valuation and
$\sigma$ instead of $\sigma_2$ for the sum of the base two digits, which is the
same as the number of powers of two in the base two representation.

We extend our previous notion of pole to allow a $2$-adic unit to be considered as a pole
of order zero, so if $N \ge 0$ and $\nu(a) \ge -N$, then we say that $a$ has at most a
pole of order $N$.

Let $[n]=$ set of $2$-powers in its base two expansion, so if $n= \sum a_i2^i$ with
all $a_i \in \{0,1\}$, then $[n]$ corresponds to the ones in the expansion, i.e., the ones
in the base two representation of $n$. The following facts are obvious but useful:
\begin{align}
&\#([n])=\sigma(n), \quad \text{min}([n])=2^{\nu(n)}, \quad \text{max}([n])=
 2^{\lfloor \log_2(n)\rfloor}, \notag\\
&[n-1]=\left([n]-\{2^{\nu(n)}\}\right) \cup \{1,2,\ldots, 2^{\nu(n)-1}\}~ 
 (\text{disjoint union}),
 \notag\\
\text{so~}&\sigma(n-1)=\sigma(n)-1+\nu(n) ~(cf. [4]), \text{~and}\notag\\
&2^{\nu(n-m)} = \text{min}([n] \cup [m]-[n] \cap [m]) 
\end{align}
The basic facts that we need about binomial coefficients are as follows:
\begin{align}
\nu\binom{a+b}{a} = \sigma(a)+\sigma(b)-\sigma(a+b) = \#(\text{base two carries for~}
 a+b)
\end{align}

If $a= \sum a_i2^i$ and $b=\sum b_i 2^i$, then we have a base $2$ carry if either
$a_i=b_i=1$, or if $\{a_i,b_i\} = \{0,1\}$ and the carry results from previous carries. A carry 
where $a_i=b_i=1$ is called a forced carry, and other carries are said to be unforced. The
number of forced carries is $\#([a] \cap [b])$.

\begin{lemma} If $n \ge m$ then $\#([n] \cap [n-m]) \ge \sigma(n)-\sigma(m)$, i.e., 
$\#([n]-[n-m]) \le \sigma(m)$, using the set difference.
\end{lemma}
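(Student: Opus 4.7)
The plan is to prove the equivalent form $\#([n]-[n-m]) \le \sigma(m)$ by induction on $\sigma(m)$, isolated through a clean single-subtraction lemma and a telescoping argument. The equivalence with the stated inequality is immediate from $\#([n]\cap[n-m]) = \sigma(n) - \#([n]-[n-m])$.

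The engine of the argument is the following claim: if $a \ge 2^j$, then $\#([a] - [a-2^j]) = 1$. If $2^j \in [a]$, this is immediate from $[a - 2^j] = [a] \setminus \{2^j\}$. Otherwise, let $k = \min\{i > j : 2^i \in [a]\}$, which exists because $a \ge 2^j$. The standard base-two borrow rule then gives
\[
 [a - 2^j] = \bigl([a] \setminus \{2^k\}\bigr) \cup \{2^j, 2^{j+1}, \ldots, 2^{k-1}\},
\]
so $[a] - [a - 2^j] = \{2^k\}$, again a singleton.

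To assemble the general case, I write $m = 2^{j_1} + 2^{j_2} + \cdots + 2^{j_s}$ with $s = \sigma(m)$, and set $n_0 = n$, $n_i = n_{i-1} - 2^{j_i}$, so $n_s = n - m$. Each partial sum satisfies $n_i \ge n_s \ge 0$, so $n_{i-1} \ge 2^{j_i}$ and the single-subtraction claim applies at every step. For any $x \in [n] - [n-m]$, letting $i$ be the smallest index with $x \notin [n_i]$ forces $x \in [n_{i-1}] - [n_i]$; therefore
\[
 [n] - [n-m] \;\subseteq\; \bigcup_{i=1}^{s} \bigl([n_{i-1}] - [n_i]\bigr),
\]
and taking cardinalities yields $\#([n] - [n-m]) \le s = \sigma(m)$.

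I do not foresee a serious obstacle. The one delicate point is the borrow description of $[a - 2^j]$ when $2^j \notin [a]$, which is a routine base-two calculation but must be stated precisely so that the subsequent chaining is valid. An alternative route would start from (2.2), using the carry interpretation of $\nu\binom{n}{m} = \sigma(m)+\sigma(n-m)-\sigma(n)$ to classify each bit of $n \setminus (n-m)$ as coming either from an $m$-bit or from a carry; this is natural, but the bookkeeping of forced versus unforced carries is heavier than the clean telescoping above.
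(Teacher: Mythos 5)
Your proof is correct and follows essentially the same route as the paper's: subtract the $2$-powers of $m$ from $n$ one at a time, note that each single subtraction removes exactly one element of the current digit set (possibly inserting lower powers), and iterate over all $\sigma(m)$ powers. You simply make the paper's brief sketch fully rigorous via the explicit single-subtraction claim and the telescoping union bound.
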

\begin{proof}
If $m=2^i$, then $n-m$ removes the smallest $2$-power in $[n]$ which is greater
than or equal to $2^i$, and if this power is bigger than $2^i$ we insert the powers down to 
$2^i$. We continue subtracting $m$ by subtracting its $2$-powers one at a time,
iterating the process.
\end{proof}

\begin{lemma} If $\binom{b}{a}$ is odd, then $\#([b] \cap [b-a]) = \#([b-a])=
\sigma(b)-\sigma(a)$; if $b$ is odd and greater than $1$, then 
$\#([b] \cap [b-3])=\sigma(b)-2$, while
if $b$ is even and greater than $2$, then $\#[b] \cap [b-3]) = \sigma(b)-1$.
\end{lemma}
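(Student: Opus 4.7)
The plan is to apply equation (2.2) for the first assertion and the iterative subtraction procedure from the proof of Lemma 2.1 for the other two, carefully tracking which powers of two enter and leave the working support. For the first part, (2.2) says that the oddness of $\binom{b}{a}$ is equivalent to there being no base-$2$ carries in $a + (b-a)$, so $[a]$ and $[b-a]$ are disjoint and $[b] = [a] \cup [b-a]$. Hence $[b-a] \subseteq [b]$, giving $\#([b]\cap[b-a]) = \sigma(b-a) = \sigma(b) - \sigma(a)$.

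For $b$ odd with $b > 1$, I would write $3 = 1+2$ and pass from $[b]$ to $[b-3]$ in two steps. Since $\nu(b) = 0$, the identity in (2.1) gives $[b-1] = [b] \setminus \{1\}$. Then I subtract $2$ from $b-1$ using the procedure of Lemma 2.1, splitting on $\mu := \nu(b-1)$: when $\mu = 1$ (i.e.\ $b \equiv 3 \pmod{4}$), $2 \in [b]$ and $[b-3] = [b] \setminus \{1,2\}$; when $\mu \ge 2$, subtracting $2$ removes $2^\mu \in [b]$ and inserts $\{2, 4, \ldots, 2^{\mu-1}\}$, none of which lies in $[b]$ because $b = 1 + 2^\mu m$ with $m$ odd. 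In either subcase $[b] \cap [b-3] = [b] \setminus \{1, 2^\mu\}$, which has size $\sigma(b) - 2$.

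For $b$ even with $b > 2$, I would subtract $2$ first and then $1$, splitting on $\nu(b)$. If $\nu(b) = 1$, then $[b-2] = [b] \setminus \{2\}$ with $\nu(b-2) \ge 2$; the subsequent subtraction of $1$ inserts $\{1, 2, 4, \ldots, 2^{\nu(b-2)-1}\}$, of which only the element $2$ already belongs to $[b]$, offsetting the earlier loss so that the net removal from $[b]$ is the single element $2^{\nu(b-2)}$. If $\nu(b) \ge 2$, subtracting $2$ produces $\nu(b-2) = 1$, so subtracting $1$ inserts only $\{1\}$, which does not lie in $[b]$; a parallel computation shows the net removal is the single element $2^{\nu(b)}$. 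In both subcases $\#([b] \cap [b-3]) = \sigma(b) - 1$.

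The main obstacle is the bookkeeping in the ``subtract $2$'' steps: one must check that the small powers inserted by (2.1) do not accidentally lie in $[b]$. The sole intentional coincidence $2 \in [b]$ in the $\nu(b)=1$ branch of the even case is precisely what accounts for the answer $\sigma(b)-1$ there rather than $\sigma(b)-2$.
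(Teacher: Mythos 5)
Your proof is correct and takes essentially the same route as the paper's (much terser) argument: the first claim from the no-carries interpretation of (2.2), and the $b\mp 3$ claims by iterating the subtraction procedure of Lemma 2.1 on the summands $1$ and $2$ of $3$, tracking which inserted small powers land back in $[b]$. The only cosmetic difference is that you subtract $1$ before $2$ in the odd case, whereas the paper subtracts $2$ first in both cases; the bookkeeping and conclusions are identical.
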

\begin{proof}
The first assertion follows from (2.2). The other parts follow from $2^\alpha -2
= 2^{\alpha -1} + \cdots + 2$, first subtracting $2$ from $b$, then subtracting $1$.
\end{proof}

Next we list the main theorems, most of which come from [9,12]. We have edited them
them to conform to our notations and conventions and include the relevant estimates
or cases. 

\begin{theorem} (cf. [9],Theorem 1.2) Let $a,c,h \in \mathbb{N}$ with $c\ge 1$ being odd
and $1 \le a \le 2^h$. Then $S(c2^h,(c-1)2^h+a)$ is a MZC and
\[
\nu(S(c2^h,(c-1)2^h+a))= \sigma(a)-1
\]
\end{theorem}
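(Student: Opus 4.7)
The plan is to recognize $(n,k) = (c2^h,\,(c-1)2^h + a)$ as a Minimum Zero Case in two steps: first specialize the MZC estimate (1.1) via a digit-sum identity to obtain the claimed lower bound, then verify sharpness through the Newton polygon criterion for $B_{n-k}^{(-k)}(x)$.

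\textbf{Step 1: specialize (1.1).} Because $c$ is odd, $c-1$ is even, so $(c-1)2^h$ has all its nonzero base-two digits in positions $\geq h+1$, while $a \leq 2^h$ has all its digits in positions $\leq h$; the two digit sets are disjoint, giving $\sigma(k) = \sigma(c-1) + \sigma(a)$. By the identity $\sigma(c-1) = \sigma(c) - 1 + \nu(c)$ in (2.1), and since $\nu(c) = 0$, we obtain $\sigma(c-1) = \sigma(c) - 1$. Combined with $\sigma(n) = \sigma(c)$, this yields $\sigma(k) - \sigma(n) = \sigma(a) - 1$, so (1.1) produces $\nu(S(n,k)) \geq \sigma(a) - 1$. (In the case $c = 1$ this recovers De Wannemaker's theorem.)

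\textbf{Step 2: verify MZC via the Newton polygon.} To upgrade the bound to equality, I would verify that the Newton polygon of $B_{n-k}^{(-k)}(x)$ is strictly decreasing, where $n - k = 2^h - a$ and the parameter is $-k = -(c-1)2^h - a$. Using the explicit maximum-pole formula from the Appendix, I would compute the pole of each coefficient of $B_{2^h - a}^{(-k)}(x)$ and show that the constant coefficient strictly dominates every other. The feature to exploit is the clean separation between the high-position component $(c-1)2^h$ of the parameter and the low-position component $2^h - a$ of the degree: this separation localizes the partition-dependent corrections and should force the maximum pole to be uniquely realized at the constant coefficient $B_{n-k}^{(-k)}$, which then coincides with $\nu(S(n,k))$ up to the digit-sum shift established in Step 1.

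The main obstacle I anticipate is this Newton polygon check. The digit-sum identity is mechanical, but the strict-decrease argument requires careful tracking of the partition-indexed valuations from the Appendix across all degrees, and I expect the tightest sub-case to be when $\nu(a)$ is small, so that $2^h - a$ has a long run of high-order ones and the max-pole formula must be applied most delicately to rule out ties with the constant coefficient.
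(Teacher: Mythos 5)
Your Step 1 is correct and coincides with the paper's digit-sum computation: $\sigma(k)-\sigma(n)=\sigma(c-1)+\sigma(a)-\sigma(c)=\sigma(a)-1$ using $\sigma(c-1)=\sigma(c)-1$ for odd $c$. The problem is Step 2, which is the entire content of the theorem and which you have only outlined, not executed. You announce that you ``would compute the pole of each coefficient'' and track ``partition-indexed valuations \ldots across all degrees,'' and you flag this as the main obstacle, with the hardest sub-case expected when $\nu(a)$ is small. That obstacle does not exist, and the fact that you anticipate a delicate computation indicates the key observation is missing: since $1\le a\le 2^h$ we have $0\le n-k=2^h-a<2^h$, so every element of $[n-k]$ is strictly less than $2^h$, whereas every element of $[n]=[c2^h]$ is at least $2^h$ (as $c$ is odd, $\nu(n)=h$). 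Hence $[n]\cap[n-k]=\emptyset$, which by the criterion of Theorem 3.3(i) is exactly the MZC condition; equivalently, the maximum pole of $B_{n-k}^{(-k)}(x)$ is $\#([n-k]-[n])=\sigma(n-k)$, and since the poles along the Newton polygon increase without gaps, the full pole $\sigma(n-k)$ is attained at the constant coefficient $B_{n-k}^{(-k)}$. This forces equality in (1.1) with no case analysis on $\nu(a)$ and no per-coefficient work.

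In short: your lower bound is fine, but sharpness is asserted as a program rather than proved, and the program as described would be far more labor than needed. Replace Step 2 by the one-line disjointness check $[n]\cap[n-k]=\emptyset$ together with the citation of Theorem 3.3(i) (or the maximum-pole formula $\#([n-k]-[n])$ from the Appendix), and the proof closes; also note, as the paper does, that the boundary case $a=2^h$ gives $k=n$ and is trivial.
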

~\\
Note. In ([9] Theorem 1.2), it is assumed that $h \ge 2$, which appears from our proof
to be an unnecessary assumption. 

\begin{theorem} (cf. [9] Theorem 1.1) Let $h,a,b,c \in \mathbb{N}$ with $0<a<2^{h+1}$,
$b2^{h+1} +a \le c2^h$ and $c\ge 1$ being odd. Then the almost minimum zero estimate
is
\[
\nu(S(c2^h,b2^{h+1}+a)) \ge \sigma(a)-1
\]
\end{theorem}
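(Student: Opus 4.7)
The plan is to apply the almost minimum zero estimate (1.3) directly with $n=c2^h$ and $k=b2^{h+1}+a$. Since $a<2^{h+1}$, the base-two expansions of $b2^{h+1}$ and $a$ occupy disjoint bit positions, so $\sigma(k)=\sigma(b)+\sigma(a)$; and $\sigma(n)=\sigma(c)$. Hence (1.3) becomes
\[
\nu(S(n,k)) \ge \sigma(a)-\bigl(\sigma(c)-\sigma(b)\bigr)+\#\bigl([n]\cap[n-k]\bigr),
\]
and the theorem reduces to the inequality $\#([n]\cap[n-k])\ge\sigma(c)-\sigma(b)-1$. Writing $d=c-2b$, the assumption $k\le n$ together with $c$ odd forces $d$ to be a positive odd integer, and $n-k=d\cdot 2^h-a$.

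First I would handle the subcase $0<a\le 2^h$, where $n-k=(d-1)2^h+(2^h-a)$. Because $d-1$ is even, $(d-1)2^h$ has all its base-two digits at positions $\ge h+1$; because $0\le 2^h-a<2^h$, the second summand has all its digits at positions $<h$. The two bit-supports are therefore disjoint, and $[n]$, which lies at positions $\ge h$, meets only the upper summand. Dividing by $2^h$ gives $\#([n]\cap[n-k])=\#([c]\cap[d-1])=\#([c]\cap[c-(2b+1)])$, so Lemma 2.1 applied with $m=2b+1$ yields $\#([c]\cap[c-(2b+1)])\ge\sigma(c)-\sigma(2b+1)=\sigma(c)-\sigma(b)-1$, as required.

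In the remaining subcase $2^h<a<2^{h+1}$, the requirement $n-k\ge 0$ forces $d\ge 3$, and I would use the alternative decomposition $n-k=(d-2)2^h+(2^{h+1}-a)$, with $d-2$ positive and odd and $0<2^{h+1}-a<2^h$. The same disjoint-support argument identifies $[n]\cap[n-k]$ with $[c]\cap[d-2]=[c]\cap[c-(2b+2)]$; Lemma 2.1 together with $\sigma(2b+2)=\sigma(b+1)=\sigma(b)+1-\nu(b+1)\le\sigma(b)+1$ (from the identities of (2.1)) gives
\[
\#\bigl([n]\cap[n-k]\bigr)\ge\sigma(c)-\sigma(b+1)\ge\sigma(c)-\sigma(b)-1.
\]
The main obstacle is just the parity bookkeeping: once one verifies that the upper and lower pieces of $n-k$ have disjoint bit-supports (which hinges on the parities of $d-1$ and $d-2$ in the two subcases), the intersection with $[n]$ reduces cleanly to Lemma 2.1, and no further input from the Bernoulli machinery is needed.
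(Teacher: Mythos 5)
Your proposal is correct and takes essentially the same route as the paper: you reduce via the almost minimum zero estimate (3.3) to showing $\#([n]\cap[n-k])\ge\sigma(c)-\sigma(b)-1$, split into the same two subcases with the same decompositions $n-k=(c-2b-1)2^h+(2^h-a)$ and $n-k=(c-2b-2)2^h+(2^{h+1}-a)$, and invoke Lemma 2.1. The only (cosmetic) difference is in the subcase $a>2^h$, where you apply Lemma 2.1 directly with $m=2b+2$ and the bound $\sigma(2b+2)=\sigma(b+1)\le\sigma(b)+1$, whereas the paper first splits off the common units bit of $c$ and $c-2b-2$ and applies Lemma 2.1 to $c-1$ with $m=2b+1$; both yield the same inequality.
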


\begin{theorem} (cf.[9] Theorem 1.4) Let $a,b,c,m,h \in \mathbb{Z}^+$ with $0<a<2^{h+1}$,
$b2^{h+1}+2^h < 2^m$, and $c \ge 1$ being odd. Then if $a<2^{h+1} -1$, the almost
minimum zero estimate is $\nu(S(c2^m+b2^{h+1}+2^h,b2^{h+2}+a)) \ge \sigma(a)-1$,
which is not sharp, i.e., $\nu(S(c2^m+b2^{h+1}+2^h,b2^{h+2}+a) \ge \sigma(a)$.
If $a=2^{h+1}-1$, we have a AMZC with the same estimate, i.e., 
$\nu(S(c2^m+b2^{h+1}+2^h,b2^{h+2}+a)) = \sigma(a)-1=h$.
\end{theorem}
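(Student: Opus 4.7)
The plan is to apply the almost minimum zero estimate $(1.3)$ and then invoke the Newton polygon criterion for its sharpness stated in the Introduction. Write $n = c2^m + b2^{h+1} + 2^h$ and $k = b2^{h+2} + a$. The disjointness of the base-two supports of the summands, forced by the hypotheses $b2^{h+1} + 2^h < 2^m$ and $a < 2^{h+1}$, gives $\sigma(n) = \sigma(c) + \sigma(b) + 1$ and $\sigma(k) = \sigma(b) + \sigma(a)$, hence $\sigma(k) - \sigma(n) = \sigma(a) - \sigma(c) - 1$. The same hypotheses yield $(c-1)2^m < n - k < c2^m$, so the bits of $n - k$ at positions $\ge m$ agree with those of $(c-1)2^m$; since $c$ is odd, this already contributes the $\sigma(c) - 1$ bits of $[c \cdot 2^m] \setminus \{2^m\}$ to $[n] \cap [n-k]$. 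A short case analysis on the low residue $r = 2^m - b 2^{h+1} + 2^h - a$, which lies in $(0, 2^m)$, then shows that precisely one further bit of $\{2^h\} \cup \{2^{h+1} \cdot 2^j : j \in [b]\}$ appears in $[r]$. In total $\#([n] \cap [n-k]) = \sigma(c)$, and substituting into $(1.3)$ gives $\nu(S(n, k)) \ge \sigma(a) - 1$.

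For sharpness I would invoke the geometric criterion from the Introduction: estimate $(1.3)$ is sharp (AMZC) iff the Newton polygon of $B_{n-k}^{(-k)}(x)$ is weakly decreasing, with its final segment horizontal. Using the maximum pole formula recalled in the Appendix, I expect this polygon's final segment to be exactly horizontal precisely when $a = 2^{h+1} - 1$, yielding the AMZC and the equality $\nu(S(n, k)) = \sigma(a) - 1 = h$. For $a < 2^{h+1} - 1$, I expect the polygon to acquire a strictly ascending final segment, so the pole of the constant coefficient $B_{n-k}^{(-k)}$ is strictly smaller than the maximum pole of $B_{n-k}^{(-k)}(x)$; combining with the factor $\binom{n}{k}$ in the identity $S(n, k) = \binom{n}{k} B_{n-k}^{(-k)}$ improves the bound to $\nu(S(n, k)) \ge \sigma(a)$.

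The main obstacle is the Newton polygon analysis. Both $n - k$ and its base-two expansion shift with $a$, so pinpointing the transition from a horizontal to a strictly ascending final segment demands a delicate application of the Appendix's pole formula and a uniform comparison of the pole of the constant term against those of all other coefficients. The base-two arithmetic of the first step is routine by comparison, but still requires care to correctly identify the single low common bit in $[n] \cap [n-k]$ across the various ranges of $a$ relative to $2^h$ and $2^{h+1} - 1$.
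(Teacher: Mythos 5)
Your first half reproduces the paper's computation: the paper likewise splits into $0<a\le 2^h$ and $2^h<a<2^{h+1}$, finds $n-k=(c-1)2^m+(2^m-b2^{h+1}+2^h-a)$ (resp.\ $(c-1)2^m+(2^m-(2b+1)2^h+2^{h+1}-a)$), concludes $\#([n-k]\cap[n])=\sigma(c)-1+1=\sigma(c)$, and substitutes into the almost minimum zero estimate to get $\nu(S(n,k))\ge\sigma(a)-1$. That part of your proposal is sound and essentially identical to the paper.

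The gap is in the sharpness analysis, which is the actual content of the theorem. You correctly recall from the Introduction that sharpness of $(1.3)$ corresponds to the Newton polygon of $B_{n-k}^{(-k)}(x)$ having a horizontal final segment, but you then write ``I expect this polygon's final segment to be exactly horizontal precisely when $a=2^{h+1}-1$'' --- that is a restatement of the claim, not an argument, and the geometric picture gives you no computational handle on it. The paper decides sharpness not by inspecting the polygon directly but via the explicit criterion of Theorem 3.3(iii): the constant coefficient attains the maximum pole iff exactly one of three partition conditions holds, namely (a) $n+(n-k)$ has no unforced carries, (b) $\nu(n)=\nu(n-k)$ and $n+(n-k)-1$ has no unforced carries, or (c) $n-k$ is odd, the least positive exponents of $[n]$ and $[n-k]$ agree, and $n+(n-k)-2$ has no unforced carries. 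In the present setting (a) fails because $n+(n-k)$ has an unforced carry at exponent $m$, and (b) fails because $\nu(n)=h\ne\nu(n-k)$; so everything hinges on condition (c), and one checks that it holds precisely when $a=2^{h+1}-1$ (then $n-k=(c-1)2^m+2^m-(2b+1)2^h+1$ is odd with least positive exponent $h$ matching that of $n$) and fails for all smaller $a$. Without this partition-level analysis --- i.e., without invoking the three candidate terms $u_1=n-k$; $u_1=n-k-1$; $u_1=n-k-3,\,u_3=1$ from the Appendix --- your proof does not establish either the non-sharpness for $a<2^{h+1}-1$ or the AMZC for $a=2^{h+1}-1$, and you yourself flag this step as unresolved. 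The logical skeleton you set up (not MZC and not AMZC implies $\nu\ge\sigma(a)$) is correct, but the decisive verification is missing.
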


\begin{theorem} ([cf. [12] Theorem 6) Let $h,u,c \in \mathbb{N}$ with $u \le 2^{h}$. Then
if $u<2^h$, the shifted almost minimum zero estimate is
\[
\nu(S(c2^h+u,2^h)) \ge h-1-\nu(u)
\]
Furthermore $u$ is even and $u \le 2^{h-1}$, or $u=1$, or $u=1+2^{h-1}$
are all the cases where the estimate is sharp, i.e., where
\[
\nu(S(c2^h+u,2^h)) = h-1-\nu(u)
\]
Finally if $u=2^h$, then $\nu(S(c2^{h}+u,2^h)=0$.
\end{theorem}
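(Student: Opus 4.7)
My plan is to substitute $n=c2^h+u$ and $k=2^h$ into the shifted almost minimum zero estimate~$(1.4)$ to get the lower bound, handle the boundary case $u=2^h$ by the Lengyel--De Wannemaker formula recalled in the Introduction, and extract the sharpness criterion from the Newton-polygon descriptions of the SMZC and SAMZC given there.

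Assume $c\ge 1$ and $1\le u<2^h$; the remaining configurations are trivial. Since $u,u-1<2^h$, the base-two expansions of $c2^h$ and of $u$ (respectively $u-1$) live in disjoint bit ranges, so
\[
[n-1]=\{2^{h+i}:2^i\in[c]\}\sqcup[u-1], \qquad [n-k]=\{2^{h+i}:2^i\in[c-1]\}\sqcup[u].
\]
I would apply $\sigma(m-1)=\sigma(m)-1+\nu(m)$ and the description of $[m-1]$ from~$(2.1)$ to both $m=c$ and $m=u$, obtaining $\sigma(n-1)=\sigma(c)+\sigma(u)-1+\nu(u)$ and $\#([n-1]\cap[n-k])=(\sigma(c)-1)+(\sigma(u)-1)$. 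With $\sigma(k-1)=h$, the $\sigma(c)$ and $\sigma(u)$ contributions cancel in~$(1.4)$, leaving $\nu(S)\ge h-1-\nu(u)$. The boundary case $u=2^h$ collapses to $\nu(S((c+1)2^h,2^h))=\sigma(2^h)-1=0$.

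For sharpness I would use the factorization $S(n,k)=\binom{n-1}{k-1}B^{(-k+1)}_{n-k}(1)$. The SAMZ estimate is sharp exactly when $B^{(-2^h+1)}_{(c-1)2^h+u}(1)$ realizes the negative of the maximum pole of the polynomial $B^{(-2^h+1)}_{(c-1)2^h+u}(x)$; by the Introduction, this occurs either in the SMZC regime (Newton polygon strictly decreasing, unique max-pole coefficient at the constant term) or in the SAMZC regime (terminal horizontal segment through the constant term, with the max-pole coefficients summing without $2$-adic cancellation at $x=1$). I plan to apply the maximum-pole formula from the Appendix with order parameter $-(2^h-1)$ and degree $(c-1)2^h+u$, and then read off the polygon shape from the interaction between the binary digits of $u$ and the $h$ ones of $2^h-1$.

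The hard part will be matching this enumeration to the three sharp families. I would case-split on the binary profile of $u$ and check, using the Appendix pole formula together with explicit formulas from Section~6 for the top coefficients of $B^{(-2^h+1)}_{(c-1)2^h+u}(x)$, whether the evaluation at $x=1$ attains the maximum pole. I expect the families $\{u\text{ even},\,u\le 2^{h-1}\}$, $\{u=1\}$, and $\{u=1+2^{h-1}\}$ to emerge as exactly those configurations in which either a unique constant-term max pole appears (SMZC) or the max-pole coefficients sum without $2$-adic cancellation at $x=1$ (SAMZC); in all other cases I expect either a non-constant coefficient to strictly dominate the pole, pushing the polygon above the constant term and failing both criteria, or the max-pole coefficients to sum with strict $2$-adic cancellation. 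The non-cancellation bookkeeping, which pins down the odd boundary value $u=1+2^{h-1}$ and excludes odd $u\ge 3$ with $u\ne 1+2^{h-1}$ and even $u$ with $2^{h-1}<u<2^h$, is the most delicate step.
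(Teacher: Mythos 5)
Your lower bound and the boundary case $u=2^h$ are correct and essentially reproduce what the paper does: the paper does not prove Theorem 2.4 directly but derives it as the case $k=2^h$ of Theorem 2.5, whose proof is exactly your substitution into the shifted almost minimum zero estimate, computing $\sigma(k-1)-\sigma(n-1)$ and $\#([n-1]\cap[n-k])$ blockwise in the high bits (coming from $c$) and the low bits (coming from $u$). That half of your argument is fine.

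The gap is in the sharpness classification, which is the substantive half of the theorem and which you leave as a plan (``I expect the families \dots to emerge''; ``the non-cancellation bookkeeping \dots is the most delicate step''). The missing idea is the reduction, carried out in the Appendix and packaged as Theorem 3.3(iv), that only two partitions can contribute the maximum pole to $B_{n-k}^{(-k+1)}(1)$: the partition $u_1=n-k$, whose condition is that $n-1+n-k$ has no unforced carries, and, when $n-k$ is odd, the partition $u_1=n-k-3$, $u_3=1$; the estimate is then sharp iff \emph{precisely one} of these two conditions holds, since two dominant terms of equal $2$-adic valuation cancel. With that reduction the case analysis is a few lines: for $u\le 2^{h-1}$ the first condition holds, so sharpness is equivalent to the failure of the second, which happens exactly when $u$ is even or $u=1$; for $u=1+2^{h-1}$ the first fails but the second holds, giving sharpness; for all other $u$ neither exactly-one configuration occurs. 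Your proposed route --- redoing the Newton-polygon and maximum-pole analysis coefficient by coefficient and checking $2$-adic non-cancellation of the evaluation at $x=1$ --- is not wrong in principle, but without the finite list of candidate partitions it is an open-ended computation, and you have not executed the step that distinguishes, say, odd $u\ge 3$ with $u\ne 1+2^{h-1}$ from $u=1+2^{h-1}$. As written, the classification of sharp cases is asserted rather than proved.
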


\begin{theorem} (cf. [12] Theorem 7) Let $h,k,u,c \in \mathbb{N}$ with $1 \le k \le 2^h$ and
$u \le 2^{\nu(k)}$. Then if $u<k$ the shifted almost minimum zero estimate is
\[
\nu(S(c2^h+u,k) \ge \nu(k)+\sigma(k)-\nu(u)-2
\]
Furthermore we have the sharp estimate
\[
\nu(S(c2^h+u,k))=\nu(k)+\sigma(k)-\nu(u)-2
\]
if and only if $u=1$, or $1 \le u \le 2^{\nu(k)-1}$ and $u$ is even, or $u=1+2^{\nu(k)-1}$,
or $u=2^{\nu(k)}$. Finally if $u=k$, so that $\sigma(k)=1$, then $\nu(S(c2^{h}+u,k))=0$. 
\end{theorem}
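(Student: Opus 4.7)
The plan is to apply the shifted almost minimum zero estimate (1.4) with $n = c2^h + u$ to obtain the lower bound, and then invoke the SAMZC sharpness criterion from Section 3 together with the Appendix formula for the maximum pole of $B_{n-k}^{(-k+1)}(x)$ to identify exactly which $u$ give equality. The theorem generalizes Theorem 2.4 (the $k=2^h$ case) with $\nu(k)$ assuming the role previously played by $h$, so the analysis of Theorem 2.4 provides a direct template.

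First I would unpack (1.4). Assuming $c \ge 1$ (otherwise $S(n,k)=0$) and $u \ge 1$, the hypothesis $u < k \le 2^h$ forces $c2^h$ and $u$ to have disjoint base-$2$ support in $n$, so $\sigma(n)=\sigma(c)+\sigma(u)$ and $\nu(n)=\nu(u)$. Applying the identity $\sigma(m-1)=\sigma(m)-1+\nu(m)$ twice gives
\[
\sigma(k-1)-\sigma(n-1) \;=\; \sigma(k)+\nu(k)-\sigma(c)-\sigma(u)-\nu(u),
\]
and comparing with the target $\nu(k)+\sigma(k)-\nu(u)-2$ reduces the whole estimate to the single inequality
\[
\#\bigl([n-1]\cap[n-k]\bigr) \;\ge\; \sigma(c)+\sigma(u)-2.
\]
Writing $n-1 = c\cdot 2^h+(u-1)$ and $n-k=(c-1)\cdot 2^h+(2^h-k+u)$ decouples the bit-$\ge h$ block from the bit-$<h$ block. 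On the high side, Lemma 2.1 with $m=1$ yields $\#([c]\cap[c-1])\ge\sigma(c)-1$. On the low side, the difference $(2^h-k+u)-(u-1)=2^h-k+1$ is controlled by the base-$2$ structure of $k$ above $2^{\nu(k)}$, and another application of Lemma 2.1 together with (2.1) delivers the remaining $\sigma(u)-1$ common $2$-powers.

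For the sharpness characterization I would invoke the Section 3 SAMZC criterion: equality in (1.4) requires both that each base-$2$ inequality used above is tight and that the maximum pole of $B_{n-k}^{(-k+1)}(x)$ is attained at $x=1$ and in at least one nonconstant coefficient. Via the Appendix formula this becomes an arithmetic condition on $n-k$ and $k-1$ that can be decoded bit by bit. I would then check, family by family, that each of $u=1$, $u$ even with $u\le 2^{\nu(k)-1}$, $u=1+2^{\nu(k)-1}$, and $u=2^{\nu(k)}$ saturates both pieces of the lower bound and satisfies the Newton polygon condition, and conversely that any other $u\in[1,2^{\nu(k)})$ either creates an extra common $2$-power in the low block or forces a strict drop in the last segment of the Newton polygon of $B_{n-k}^{(-k+1)}(x)$, in either case pushing $\nu(S(n,k))$ strictly above the stated value. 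The final sentence ($u=k$, so $\sigma(k)=1$ and $k=2^{\nu(k)}$) is handled directly: factor $n=c2^h+2^{\nu(k)}=2^{\nu(k)}(c2^{h-\nu(k)}+1)$ where $c2^{h-\nu(k)}+1$ is odd, and apply the Lengyel--De Wannemaker formula $\nu(S(c'\cdot 2^{\nu(k)},2^{\nu(k)}))=\sigma(2^{\nu(k)})-1=0$.

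The main obstacle is the \emph{only if} direction. The lower bound is essentially a carry-counting exercise, but ruling out equality for every $u$ outside the four listed families requires simultaneously tracking (i) the position and parity of $u-1$ inside $[0,2^{\nu(k)}-1]$ and (ii) the slope of the last segment of the Newton polygon of $B_{n-k}^{(-k+1)}(x)$, since either source can independently supply the extra valuation. The Theorem 2.4 template handles the case where these two mechanisms coincide, but for general $k$ the relevant Newton polygon vertices depend nontrivially on the bits of $k$ above $2^{\nu(k)}$, and keeping the two mechanisms synchronized throughout the case split is where the proof becomes most delicate.
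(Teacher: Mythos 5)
Your route is the paper's own: apply the shifted almost minimum zero estimate with $n=c2^h+u$, split $n-1$ and $n-k$ into the block of bits $\ge h$ (governed by $c$, taken odd WLOG) and the block of bits $<h$ (governed by $u$ and $2^h-k$), and then decide sharpness via the SAMZC criterion. The lower bound goes through as you sketch it; in fact the paper gets the count exactly rather than as an inequality, via the identity $[n-1]\cap[n-k]=[(c-1)2^h]\cup([u]\cap[u-1])$, so that $\#([n-1]\cap[n-k])=\sigma(c)-1+\sigma(u)-1$ on the nose (the bits of $2^h-k+u$ below position $\nu(k)$ coincide with those of $u$). That identity also disposes of the worry in your last paragraph: the bits of $k$ above $2^{\nu(k)}$ drop out completely, and sharpness depends only on the parity of $u$ and its position relative to $2^{\nu(k)-1}$.

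The genuine gap is in the only-if direction of the sharpness claim. You describe failure of equality for excluded $u$ as coming from one of two independent sources of extra valuation: an extra common $2$-power, or ``a strict drop in the last segment of the Newton polygon of $B_{n-k}^{(-k+1)}(x)$.'' But the paper explicitly cautions, right after (3.4), that the Newton-polygon geometry does not characterize the shifted case. The operative mechanism is the ``precisely one'' clause of Theorem 3.3(iv): the estimate is sharp iff exactly one of the two partitions ($u_1=n-k$, versus $u_1=n-k-3$ with $u_3=1$) satisfies its condition. For odd $u$ with $1<u\le 2^{\nu(k)-1}$ the sum $n-k+n-1$ has no unforced carries, so condition (a) holds; and since $n-k\equiv u\pmod 2$ is then odd, condition (b) holds as well. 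Both partitions attain the maximal pole and their contributions cancel, which is why these $u$ are excluded even though neither of your two ``sources'' supplies any visible extra valuation. As stated, your framework cannot detect this cancellation, so the deferred case analysis would not close without importing the exactly-one-partition requirement. The $u=k$ endpoint via Lengyel--De Wannemacker is fine.
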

~\\
Notes. Theorem (2.4) is the special case of Theorem (2.5) for $k=2^h$. The estimates
given by Lengyel in [12] are considerably weaker than ours, since $\nu(u) < \lfloor \log_2(u)
\rfloor$ unless $u=2^m$. Also he gets exact values only for the $2$-powers instead
of for all the even numbers less than or equal to $2^{\nu(k)-1}$.
 ~\\

The next asymptotic result does not depend on the estimates or cases given in the
Introduction but  depends instead on the new estimate given in the Appendix.

\begin{theorem} (cf. [12] Theorem 5)
If $\nu(k) < \nu(n)$ or if $\nu(k)=\nu(n)$ and $2^{\nu(n-k)} \in [k]$, then
\[
\lim_{h\rightarrow\infty} \nu(S(2^h n, 2^h k)) = \sigma(k)-\sigma(n) + 
 \nu\binom{n+n-k}{n}
\]
and this limit is attained if $2^{h-1+\nu(n-k)} \ge \nu\binom{n+n-k}{n}$. If $\nu(n)<\nu(k)$
or $\nu(n)=\nu(k)$ and $2^{\nu(n-k)} \in [n]$, then
\[
\lim_{h\rightarrow\infty} \nu(S(2^h n, 2^h k)) = \sigma(k-1)-\sigma(n-1) +
 \nu\binom{n-1+n-k}{n-1}
\]
Furthermore, 
if $\nu(n)<\nu(k)$, the limit is attained if $2^{h-1+\nu(n)} \ge \nu\binom{n-1+n-k}{n-1}$,
while if $\nu(n)=\nu(k)$ and $2^{\nu(n-k)} \in [n]$, the limit is attained if 
$2^{h-1+\nu(n)} > \nu\binom{n-1+n-k}{n-1}$.
\end{theorem}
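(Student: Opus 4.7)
The strategy is to begin with the N\"orlund--Stirling identity
\[
S(2^h n, 2^h k) \;=\; \binom{2^h n}{2^h k}\, B_{2^h(n-k)}^{(-2^h k)},
\]
which expresses the Stirling number as a binomial coefficient times the constant term of a higher order Bernoulli polynomial, together with its companion
\[
S(2^h n, 2^h k) \;=\; \binom{2^h n - 1}{2^h k - 1}\, B_{2^h(n-k)}^{(-2^h k + 1)}(1),
\]
which one derives directly from the generating function. Taking $\nu$ of each formula and applying (2.2) together with (2.1) gives $\nu\binom{2^h n}{2^h k} = \sigma(k)+\sigma(n-k)-\sigma(n)$ and $\nu\binom{2^h n - 1}{2^h k - 1} = \sigma(k-1)+\sigma(n-k)-\sigma(n-1)$, so in each case the theorem reduces to computing the asymptotic valuation of the corresponding Bernoulli quantity as $h\to\infty$.

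The next step is to expand each Bernoulli number via the partition formula for higher order Bernoulli polynomials collected in Section~6 and to apply the new estimate of Appendix Theorem~6.1. For $w = 2^h k$ and $N = 2^h(n-k)$, the summand indexed by a partition $\lambda \vdash N$ carries a factor $\binom{w}{\ell(\lambda)}$ whose $2$-adic valuation $\sigma(\ell(\lambda)) + \sigma(w - \ell(\lambda)) - \sigma(w)$ stabilizes as soon as $2^h > \ell(\lambda)$; the analysis of $B^{(-2^h k + 1)}(1)$ is analogous. Appendix Theorem~6.1 is designed to show that only a short list of ``dominant'' partitions can attain the minimum $2$-adic valuation of the sum, while the other summands contribute strictly larger valuations and hence do not affect the limit. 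A direct computation should then identify the dominant term: in case~1 its valuation combines with the prefactor to yield $\sigma(k)-\sigma(n)+\nu\binom{2n-k}{n}$, and in case~2 the shifted analysis yields $\sigma(k-1)-\sigma(n-1)+\nu\binom{2n-k-1}{n-1}$.

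The hypotheses comparing $\nu(n)$ and $\nu(k)$, together with the tie-breakers $2^{\nu(n-k)}\in[k]$ versus $2^{\nu(n-k)}\in[n]$, select which of the two identities provides the smaller (and hence correct) asymptotic valuation; the split mirrors the division between the MZC and SMZC non-asymptotic theorems developed earlier. The main obstacle is twofold. First, one must show that in each case the winning dominant partition is not cancelled by a same-valuation competitor, which is precisely where the tie-breaking hypotheses become essential. Second, one must extract the sharp effective thresholds of the form $2^{h-1+\nu(n-k)}\ge\nu\binom{2n-k}{n}$ (and the analogous conditions in case~2) from the quantitative bound of Appendix Theorem~6.1, which amounts to a careful bookkeeping of how fast the non-dominant partition contributions decay. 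The strict inequality in the subcase $\nu(n)=\nu(k)$ with $2^{\nu(n-k)}\in[n]$ should reflect an extra base-$2$ carry that appears in $\binom{n+n-k}{n}$ but not in $\binom{n-1+n-k}{n-1}$, forcing one additional unit of separation before the dominant term isolates cleanly.
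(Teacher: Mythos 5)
Your outline follows the same architecture as the paper's proof: the two identities $S=\binom{n}{k}B_{n-k}^{(-k)}$ and $S=\binom{n-1}{k-1}B_{n-k}^{(-k+1)}(1)$, the partition expansion of Section~6, Theorem~6.1/Corollary~6.1 to control far-away partitions, and the case split on $\nu(n)$ versus $\nu(k)$ with the tie-breakers $2^{\nu(n-k)}\in[k]$ or $[n]$. But what you label as ``the main obstacle'' --- isolating the dominant partition and extracting the threshold --- is precisely the content of the proof, and you do not carry it out. The paper's actual mechanism is a two-regime split on $d=d(u)$: for $n-k\ge d\ge n-k-2^{\nu(n-k)}$, the hypothesis $2^{\nu(n-k)}\notin[n]$ (resp.\ $\notin[n-1]$ in the shifted case) forces $\nu\binom{d+n}{n}\ge\nu\binom{n-k+n}{n}$, and the extra factor $2^{-(n-k-\nu(u))}$ in $t_u$ is a strict penalty for every $u$ other than $u_1=n-k$, so that single partition is the unique minimum in this range; for $d<n-k-2^{\nu(n-k)}$, after scaling by $2^h$ Corollary~6.1 gives $2^h(n-k)-\nu(u)\ge 2^{h-1}(n-k-d)>2^{h-1+\nu(n-k)}$, which exceeds $\nu\binom{n+n-k}{n}$ exactly under the stated threshold. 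This is where the bound $2^{h-1+\nu(n-k)}\ge\nu\binom{n+n-k}{n}$ comes from; your sketch gestures at it but never derives it.

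Two of the details you do supply are wrong. First, the binomial factor in the partition expansion is $\binom{s}{d}$ with $s=l-N-1=-2^h n-1$, i.e.\ $\pm\binom{2^h n+d}{d}$, whose valuation counts carries in $2^h n+d$; it is not $\binom{2^h k}{\ell(\lambda)}$, and the relevant phenomenon is not ``stabilization once $2^h>\ell(\lambda)$'' but the carry comparison between $d+2^h n$ and $(n-k)2^h+2^h n$ governed by whether $2^{\nu(n-k)}$ lies in $[n]$. Second, the strict inequality in the case $\nu(n)=\nu(k)$, $2^{\nu(n-k)}\in[n]$ is not caused by ``an extra carry in $\binom{n+n-k}{n}$ versus $\binom{n-1+n-k}{n-1}$''; it arises because in that case $\nu(n)<\nu(n-k)$, and for $d$ in the window $n-k-2^{\nu(n)}<d<n-k$ the carry comparison only yields $\nu\binom{d+n-1}{n-1}\ge\nu\binom{n-k+n-1}{n-1}$ with possible equality, so the remaining terms must be pushed \emph{strictly} above the dominant one, which costs the strict threshold $2^{h-1+\nu(n)}>\nu\binom{n-1+n-k}{n-1}$. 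As written, the proposal is a correct plan with an unexecuted core and two misidentified mechanisms, not a proof.
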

~\\
Remark. Our formulas for the limit are simplier than Lengyel's, and the estimates
for when the limits are attained are exponentially better. 

\section{Basic properties of the estimates and cases, some examples and new results}

The key formula is
\begin{align}
S(n,k) = \binom{n}{k} B_{n-k}^{(-k)}
\end{align}
and since $B_{n-k}^{(-k)} = (k/n) B_{n-k}^{(-k+1)}(1)$, we get
\begin{align}
S(n,k)=\binom{n-1}{k-1}B_{n-k}^{(-k+1)}(1)
\end{align}
Hence, from our maximum pole formula, we have
\begin{align}
\nu(S(n,k)) = \nu\binom{n}{k} + \nu\left( B_{n-k}^{(-k)}\right) \ge \sigma(k)
 -\sigma(n) + \#([n] \cap [n-k])
\end{align}
and
\begin{align}
\nu(S(n,k)) \ge \sigma(k-1)-\sigma(n-1) + \#([n-1] \cap [n-k])
\end{align}

Formula (3.3) is the almost minimum zero estimate, which is sharp 
without the MZC iff the Newton
polygon of $B_{n-k}^{(-k)}(x)$ is weakly decreasing, i.e., its final segment is
horizontal. The geometry of sharpness for the shifted almost minimum zero case
is less clear, namely we may or may not have a horizontal final segment.

\begin{theorem}
The almost minimum zero and shifted almost minum zero estimates are non-negative.
\end{theorem}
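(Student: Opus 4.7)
The plan is to reduce both inequalities directly to Lemma 2.1, with essentially no extra work. Recall that Lemma 2.1 asserts that whenever $n \ge m$, one has $\#([n] \cap [n-m]) \ge \sigma(n) - \sigma(m)$. Since $S(n,k) = 0$ for $n < k$, I may assume $n \ge k$ throughout, so Lemma 2.1 applies in the forms I need.

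For the almost minimum zero estimate, non-negativity amounts to showing
\[
\#([n] \cap [n-k]) \ge \sigma(n) - \sigma(k),
\]
which is precisely Lemma 2.1 with $m = k$. For the shifted almost minimum zero estimate, the inequality to be proved is
\[
\#([n-1] \cap [n-k]) \ge \sigma(n-1) - \sigma(k-1).
\]
The key observation is the identity $(n-1) - (k-1) = n-k$, so that $[n-k] = [(n-1) - (k-1)]$. Applying Lemma 2.1 to the pair $(n-1, k-1)$, valid because $n-1 \ge k-1$, gives exactly this inequality.

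Since both statements follow by direct substitution into Lemma 2.1, there is no real obstacle here beyond the bookkeeping of the shifted indices; all of the combinatorial content has already been absorbed into Lemma 2.1. If anything, the only subtlety worth remarking on is that the $\sigma(n-1) - \sigma(k-1)$ version does not obviously follow from the $\sigma(n) - \sigma(k)$ version (the relation $\sigma(m-1) = \sigma(m) - 1 + \nu(m)$ shows the two right-hand sides differ by $\nu(n) - \nu(k)$, which has no definite sign), but applying Lemma 2.1 to the shifted pair bypasses this issue completely.
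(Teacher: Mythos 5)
Your proof is correct and follows exactly the paper's own route: both inequalities are reduced to Lemma 2.1, the shifted case by applying it to the pair $(n-1,k-1)$. The closing remark about why the shifted inequality does not follow from the unshifted one is a nice additional observation but does not change the argument.
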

\begin{proof}
The almost minimum zero estimate is
\[
\nu(S(n,k)) \ge \sigma(k)-\sigma(n) + \#([n] \cap [n-k])
\]
which is non-negative by (Lemma 2.1). The proof for the shifted estimate is identical, replacing
$(n,k)$ by $(n-1,k-1)$.
\end{proof}

Note that $\nu\left( B_{n-k}^{(-k+1)}(1)\right) = -\sigma(n-k)$ iff $\nu\left( 
B_{n-k}^{(-k+1)}\right) = -\sigma(n-k)$ iff $B_{n-k}^{(-k+1)}(x)$ is a maximum
pole case iff $S(n-1,k-1)$ is a MZC. This gives an alternative proof of the Amdeberhan
conjecture [5] which was proved in [8]. Thus $\nu(S(n+1,k+1)) = \sigma(k)-\sigma(n)$
iff $S(n,k)$ is a MZC, in which case $\nu(S(n+1,k+1))=\nu(S(n,k))$.

\begin{theorem} (Odd Stirling numbers of the second kind) The following are
equivalent:
\begin{itemize}
\item[(a)] $S(n,k)$ is odd.
\item[(b)] $\#([n] \cap [n-k]) = \sigma(n)-\sigma(k)$ and $S(n,k)$ is a MZC
or AMZC.
\item[(c)] $\#([n-1] \cap [n-k]) = \sigma(n-1)-\sigma(k-1)$ and $S(n,k)$ is a SMZC
or SAMZC.
\end{itemize}
\end{theorem}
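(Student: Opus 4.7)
The plan is that both equivalences are nearly immediate corollaries of two facts already in hand: the almost minimum zero estimate (3.3) (respectively its shifted analogue (3.4)) together with its non-negativity, established in Theorem~3.1. I would handle the two equivalences in parallel.

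For (a) $\Leftrightarrow$ (b), I would start from
\[
\nu(S(n,k)) \ge \sigma(k) - \sigma(n) + \#([n] \cap [n-k]),
\]
and recall that the right-hand side is itself $\ge 0$ by Theorem~3.1 (which is just Lemma~2.1 applied to $(n,k)$). Consequently $\nu(S(n,k))=0$ is equivalent to the simultaneous occurrence of two things: that the lower bound equals $0$, which is exactly the digit identity $\#([n] \cap [n-k]) = \sigma(n) - \sigma(k)$, and that the inequality (3.3) is an equality, which by definition means $S(n,k)$ is a MZC or AMZC (invoking the Section~1 convention that ``MZC or AMZC'' abbreviates ``the almost minimum zero estimate is sharp''). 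Each direction is then a one-line check: ``odd'' forces both, and conversely both together force $\nu(S(n,k)) = 0$.

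For (a) $\Leftrightarrow$ (c) the argument is verbatim the same, with the shifted estimate (3.4) replacing (3.3); the required non-negativity $\sigma(k-1) - \sigma(n-1) + \#([n-1] \cap [n-k]) \ge 0$ comes from applying Lemma~2.1 to the pair $(n-1,k-1)$, using the identity $(n-1)-(k-1)=n-k$ so that the set $[n-k]$ in (3.4) is really $[(n-1)-(k-1)]$.

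The argument is essentially bookkeeping and I do not anticipate a genuine obstacle. The only delicate point is the logical structure of the sharpness clause --- one must read ``MZC or AMZC'' via the Section~1 convention as ``the almost minimum zero estimate (3.3) is sharp'' rather than as the strict disjoint alternatives, and similarly for ``SMZC or SAMZC'' in (c). Once that convention is invoked, each implication is a single appeal to the corresponding estimate together with Theorem~3.1.
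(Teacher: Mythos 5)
Your proposal is correct and is essentially identical to the paper's own proof: both combine the estimate (3.3) (resp.\ (3.4)) with its non-negativity from Theorem~3.1 to conclude that $\nu(S(n,k))=0$ iff the bound is zero and sharp, reading ``MZC or AMZC'' as sharpness of the almost minimum zero estimate. No differences worth noting.
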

\begin{proof}
Since $\nu(S(n,k)) \ge \sigma(k)-\sigma(n) + \#([n] \cap [n-k]) \ge 0$, we have
$\nu(S(n,k))=0$ iff the almost minimum zero estimate is sharp and zero, i.e.,
$\#([n] \cap [n-k]) = \sigma(n) -\sigma(k)$ and the estimate is sharp. 
The argument is similar for the shifted case.
\end{proof}

A different necessary and sufficient condition for $S(n,k)$ to be odd is proved in
([6], Theorem 2.1), which has no obvious relation to ours. 

The preceding theorem is particularly helpful once we have established criteria for the
different cases.

\begin{theorem} (Criteria for the four cases)
\begin{itemize}
\item[(i)] $S(n,k)$ is a MZC iff $[n-k] \cap [n] = \emptyset$.
\item[(ii)] $S(n,k)$ is a SMZC iff $[n-k]\cap [n-1] = \emptyset$.
\item[(iii)] $S(n,k)$ is a AMZC iff $[n-k]\cap [n] \ne \emptyset$ and precisely one
of the following conditions holds:
\begin{itemize}
\item[(a)] $\nu\binom{n+n-k}{n} = \#([n] \cap [n-k])$, i.e., $n+n-k$ has no unforced
carries.
\item[(b)] $\nu\binom{n+n-k-1}{n} = \#([n]\cap [n-k])-1$, i.e., $\nu(n) =\nu(n-k)$
and $n+n-k-1$ has no unforced carries.
\item[(c)] $n-k$ is odd and $\nu\binom{n+n-k-2}{n} = \#([n] \cap [n-k])-1$, i.e.,
$n-k$ is odd, the least positive exponent in $[n]$ is the same as the least positive
exponent in $[n-k]$, and $n+n-k-2$ has no unforced carries.
\end{itemize}
\item[(iv)] $S(n,k)$ is a SAMZC iff $[n-k]\cap [n-1] \ne \emptyset$ and precisely
one of the following conditions holds:
\begin{itemize}
\item[(a)] $\nu\binom{n-1+n-k}{n-1} = \#([n-1]\cap [n-k])$, i.e., $n-1+n-k$
has no unforced carries.
\item[(b)] $n-k$ is odd and 
$\nu\binom{n-1+n-k-2}{n-1} =\#([n-1]\cap [n-k])-1$, i.e., $n-k$ is odd,
the least positive exponent in $[n-1]$ is the least positive exponent in $[n-k]$, and
$n-1+n-k-2$ has no unforced carries.
\end{itemize}
\end{itemize}
\end{theorem}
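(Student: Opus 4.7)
My approach translates each case via (3.1) and (3.2) into a question about the pole of a particular quantity---the constant coefficient of $B_{n-k}^{(-k)}(x)$ in (i), (iii), or the value $B_{n-k}^{(-k+1)}(1)$ in (ii), (iv)---relative to the maximum pole of the corresponding polynomial, where the latter is supplied by the Appendix formula.

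For (i), using $\nu\binom{n}{k}=\sigma(k)+\sigma(n-k)-\sigma(n)$, MZC is equivalent to $\nu(B_{n-k}^{(-k)})=-\sigma(n-k)$, the largest conceivable pole for any coefficient of $B_{n-k}^{(-k)}(x)$. The AMZ estimate (3.3) rewrites as $\nu(B_{n-k}^{(-k)})\ge-\sigma(n-k)+\#([n]\cap[n-k])$, which forces $\#([n]\cap[n-k])=0$ whenever MZC holds. Conversely, when $[n]\cap[n-k]=\emptyset$ the Appendix max-pole formula yields a strictly decreasing Newton polygon, placing the constant coefficient at height $-\sigma(n-k)$, so MZC holds. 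The same argument, with $(n,k)$ replaced by $(n-1,k-1)$ in the relevant digit set and (3.2) in place of (3.1), gives (ii).

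For (iii), AMZC is the sharpness of (3.3) combined with failure of MZC, which by (i) means $[n-k]\cap[n]\ne\emptyset$ together with $\nu(B_{n-k}^{(-k)})=-\sigma(n-k)+\#([n]\cap[n-k])$. The latter says that the constant coefficient $c_0$ of $B_{n-k}^{(-k)}(x)$ attains the max pole $\sigma(n-k)-\#([n]\cap[n-k])$, and since $[n]\cap[n-k]\ne\emptyset$ the Appendix geometry forces a horizontal final segment so that at least one $c_j$ with $j\ge 1$ ties with $c_0$. To extract the explicit criteria (a), (b), (c), I expand $B_{n-k}^{(-k)}(x)=\sum_j\binom{n-k}{j}B_{n-k-j}^{(-k)}x^j$, apply the AMZ estimate to each $B_{n-k-j}^{(-k)}$ (equivalently, to $S(n-j,k)$), and read off from the Appendix formula the exact set of $j$ for which $c_j$ can match $c_0$ in pole. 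The shifted binomials $\binom{n+n-k-j}{n}$ arise from this matching for $j=0,1,2$, and the digit/parity side conditions ($\nu(n)=\nu(n-k)$ in (b); $n-k$ odd with matching minimum positive exponents in (c)) encode precisely when $\nu\binom{n-k}{j}$ is small enough to permit the equality. Mutual exclusivity and exhaustiveness of (a)--(c) then follow from the structure of the Appendix formula. Case (iv) is parallel via (3.2), but with the twist that $B_{n-k}^{(-k+1)}(1)$ is a sum over all coefficients rather than just the constant one; cancellations or non-contributions at $x=1$ eliminate the analogue of (iii)(b), leaving only (iv)(a) and (iv)(b).

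The main obstacle will be the combinatorial verification in (iii) and (iv): pinning down, via the Appendix max-pole formula, exactly which $j\ge 1$ can produce $c_j$ tying with $c_0$ for the max pole, and rewriting the resulting algebraic equalities on $\binom{n-k}{j}B_{n-k-j}^{(-k)}$ as the stated unforced-carry conditions on $\binom{n+n-k-j}{n}$ together with their digit/parity side conditions. The equivalence of the two forms of each criterion (the $\nu$-identity form and the unforced-carry-plus-parity reformulation) then follows routinely from Lemma 2.1 and the identities (2.1)--(2.2).
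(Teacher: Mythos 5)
Your treatment of (i) and (ii) is sound and matches the paper's framework: MZC/SMZC is exactly the statement that the constant coefficient of $B_{n-k}^{(-k)}(x)$ (resp.\ the value $B_{n-k}^{(-k+1)}(1)$) attains the pole $\sigma(n-k)$, and the Appendix max-pole formula $\#([n-k]-[n])$ (resp.\ $\#([n-k]-[n-1])$) makes this equivalent to the empty-intersection condition.

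For (iii) and (iv), however, your proposed mechanism is wrong. The binomials $\binom{n+n-k-j}{n}$, $j=0,1,2$, do \emph{not} arise from the Appell coefficients $c_j=\binom{n-k}{j}B_{n-k-j}^{(-k)}$: applying the almost-minimum-zero estimate to $B_{n-k-j}^{(-k)}$ (i.e.\ to $S(n-j,k)$) would produce binomials of the shape $\binom{(n-j)+(n-j)-k}{\,n-j\,}$, which are not the ones in the statement. The correct source is the partition expansion (6.4) of the single number $B_{n-k}^{(-k)}$ itself: with $s=-k-(n-k)-1=-(n+1)$ one has $\binom{s}{d}=\pm\binom{n+d}{n}$, and the only partitions that can carry the maximum pole are $u_1=n-k$ ($d=n-k$), $u_1=n-k-1$ ($d=n-k-1$), and $u_1=n-k-3,\ u_3=1$ ($d=n-k-2$), giving exactly the three binomials of (a), (b), (c) via (6.6)--(6.8); the a priori candidate $u_1=n-k-4,\ u_3=1$ must be eliminated by Lemma 2.2. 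Moreover, knowing which higher coefficients $c_j$ can \emph{tie} with $c_0$ does not decide the question AMZC actually poses, namely whether $c_0$ attains the maximum pole of the polynomial; your step ``read off the exact set of $j$ for which $c_j$ can match $c_0$'' is therefore answering the wrong question. Relatedly, ``precisely one'' is not a mutual-exclusivity fact to be derived: if two of the candidate terms attain the maximum pole their odd numerators cancel modulo $2$ and $c_0$ fails to attain it, so the exactness of the count is part of the criterion. Finally, in (iv) the analogue of (iii)(b) disappears not because of ``cancellations at $x=1$'' but because $B_{n-k}^{(-k+1)}(1)$ is given by (6.5), a sum restricted to partitions with $w=n-k$ exactly, which simply excludes the $u_1=n-k-1$ partition.
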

~\\
Remark. Note that the shift is generally advisable only if $\nu(n) < \nu(k)$.

\begin{proof}
We omit the proof details, which follow from the material on maximum poles in the
Appendix, other than to note that in (iii), (a) comes from the partition where $u_1 =n-k$,
and (b) comes from the partition where $u_1 = n-k-1$, and (c) comes from the
partition where $u_1 = n-k-3$ and $u_3=1$. Similarly for (iv). Conditions $(b)$ and $(c)$
cannot both hold by Lemma 2.2, since if $2^0 \in [n] \cap [n-k]$ and $2^1 \in [n]-
[n-k-2]$ or $2^1 \in [n-k-2]-[n]$, then $n+n-k-2$ has an unforced carry in place $2^1$.
Lemma 2.2 also eliminates the a priori possibility that the partition where $u_1=
n-k-4$ and $u_3=1$, with $d=n-k-3$ being odd, gives the maximum pole. Thus the
partitions noted in (iii) or (iv) are the only ones that can give the maximum pole
of $B_{n-k}^{(-k)}(x)$ or $B_{n-k}^{(-k+1)}(x)$.
\end{proof}

\begin{corollary} (Hong-Amdeberhan [5,8]) 
$\nu(S(n,k)) = \sigma(k-1)-\sigma(n-1)$ iff $S(n,k)$ is a SMZC iff $S(n-1,k-1)$ is a
MZC iff $\binom{n-1+n-k}{n-1}$ is odd.
\end{corollary}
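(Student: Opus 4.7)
The plan is to chain together four quick equivalences, each coming directly from a definition or a result already established in the excerpt, so this corollary reduces to a book-keeping exercise with no real obstacle.

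The first equivalence, $\nu(S(n,k)) = \sigma(k-1) - \sigma(n-1) \iff S(n,k)$ is a SMZC, is just the definition of SMZC: it says the shifted minimum zero estimate (1.2) is sharp. For the second equivalence, $S(n,k)$ SMZC iff $S(n-1,k-1)$ MZC, I would start from (3.2) and compute $\nu\binom{n-1}{k-1} = \sigma(k-1) + \sigma(n-k) - \sigma(n-1)$ via (2.2); sharpness of (1.2) then becomes $\nu(B_{n-k}^{(-k+1)}(1)) = -\sigma(n-k)$, which by the remark immediately preceding Theorem 3.2 is equivalent to $S(n-1,k-1)$ being a MZC.

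For the last two equivalences, I would apply Theorem 3.3(i) to the pair $(n-1,k-1)$ to rewrite the MZC condition as $[(n-1)-(k-1)] \cap [n-1] = [n-k] \cap [n-1] = \emptyset$, and then invoke (2.2): $\binom{(n-1)+(n-k)}{n-1}$ is odd iff the base-$2$ addition $(n-1)+(n-k)$ has zero carries, which, since every carry chain must originate at a forced carry (a position where both summands have a $1$), is equivalent to $[n-1] \cap [n-k] = \emptyset$. Stringing the four equivalences together completes the proof.
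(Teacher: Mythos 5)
Your proof is correct and follows essentially the route the paper intends: the paper leaves this corollary without an explicit proof, presenting it as an immediate consequence of the definition of SMZC, the remark preceding Theorem 3.2 (the pole condition $\nu(B_{n-k}^{(-k+1)}(1))=-\sigma(n-k)$ iff $S(n-1,k-1)$ is a MZC), the criterion of Theorem 3.3(i)--(ii), and the carry characterization (2.2) --- exactly the ingredients you chain together. No gaps.
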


\begin{corollary} (Central Stirling numbers)
For the central Stirling number $S(2k,k)$ the almost minimum zero estimate is
\begin{itemize}
\item[(a)] $\nu(S(2k,k)) \ge \#(\text{pairs of consecutive ones in~} k)$, and 
\item[(b)] $\nu(S(2k,k)) = 1$ iff $S(2k,k)$ is a AMZC iff $k = 3+8k'$ with $k'$ Fibbinary.
\end{itemize}
\end{corollary}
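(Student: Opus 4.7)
The plan is to specialize the almost minimum zero machinery to the central case $n = 2k$ and apply Theorem 3.3(iii).

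For part (a), I would read off the estimate (3.3) with $n = 2k$. Since $\sigma(2k) = \sigma(k)$ the $\sigma$ terms cancel, leaving $\nu(S(2k,k)) \ge \#([2k]\cap[k])$. Because $[2k]$ is $[k]$ shifted up by one, $2^j \in [2k]\cap[k]$ exactly when bits $j-1$ and $j$ of $k$ are both $1$, so the right-hand side counts the pairs of consecutive ones in the binary expansion of $k$.

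For part (b), I plan first to show that AMZC is equivalent to $k = 3 + 8k'$ with $k'$ Fibbinary, and then to deduce the equivalence with $\nu(S(2k,k)) = 1$. Using Theorem 3.3(iii), condition (b) of that theorem is vacuous since $\nu(2k) = \nu(k)+1 \ne \nu(k)$. Condition (a) of that theorem also fails as soon as $[2k]\cap[k] \ne \emptyset$: for any maximal run of consecutive ones of $k$ at positions $j,\dots,j+\ell-1$ (with $b_{j+\ell}=0$), the forced carries cascading through the run exit at position $j+\ell$ as an unforced carry, since bit $j+\ell$ of $2k$ equals $b_{j+\ell-1}=1$ while bit $j+\ell$ of $k$ is $0$ and a carry arrives from below. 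So AMZC reduces to condition (c).

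Condition (c) requires $k$ odd, the least positive exponents of $[k]$ and $[2k]$ to coincide, and $3k-2 = 2k + (k-2)$ to have no unforced carries. Oddness forces $b_0 = 1$, which makes the least positive exponent of $[2k]$ equal to $1$, and this in turn forces $b_1 = 1$. Writing the bits of $2k$ and $k-2$ in terms of those of $k$, I would analyze $2k+(k-2)$ position by position: bits $0$ and $1$ are clean, bit $2$ contributes $1+b_2$ producing a forced carry iff $b_2=1$, and such a carry necessarily propagates through the run of ones immediately above bit $1$ and terminates in an unforced carry, so $b_2 = 0$ is required. Writing $k = 3 + 8k'$, the remaining no-forced-carry condition above bit $3$ reduces to the bits of $k'$ having no two adjacent ones, i.e., $k'$ Fibbinary.

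Finally, $\nu(S(2k,k)) = 1 \iff$ AMZC follows: AMZC forces $k = 3+8k'$ with $k'$ Fibbinary, which has exactly one pair of consecutive ones, so the now-sharp estimate gives $\nu = 1$; conversely $\nu = 1$ implies at most one pair by part (a), and zero pairs would give an MZC with $\nu = 0$, so we have exactly one pair and a sharp estimate, hence AMZC. The principal technical obstacle is the bit-level analysis of $2k+(k-2)$ used to extract the normal form $k = 3 + 8k'$ with $k'$ Fibbinary from condition (c); everything else is direct bookkeeping once Theorem 3.3(iii) is available.
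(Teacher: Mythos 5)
Your proposal is correct and follows essentially the same route as the paper: part (a) is the direct specialization of estimate (3.3) to $n=2k$ via $\sigma(2k)=\sigma(k)$ and the identification of $\#([2k]\cap[k])$ with pairs of consecutive ones, and part (b) rests on Theorem 3.3(iii), with conditions (a) and (b) there ruled out and condition (c) shown to be equivalent to $k=3+8k'$ with $k'$ Fibbinary. Your write-up merely makes explicit the carry-cascade arguments and the converse direction that the paper compresses into ``it is easy to see that none of the conditions in (iii) apply.''
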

\begin{proof}
For (a), if $n=2k$, then $\sigma(n) = \sigma(k)$ and $\#(\text{pairs of consecutive 
ones in~k})\linebreak
= \#([k] \cap [2k]) = \#([n] \cap [n-k])$ so the inequality is just the almost zero minimum 
zero estimate. For (b), we have $\nu(S(2k,k)) =1$ iff $\#([n] \cap [n-k]) = 1$ iff $S(n,k)$ is a AMZC.
If $k=3+8k'$ where $k'$ is Fibbinary then $n-k=k$ is odd, so the least positive exponent in
$k$ is $1$, which is the least positive exponent in $n$. Also $n+n-k-2 = 6+16k' +1+
8k' = 2+4+8(2k' +k')+1$ has no carries, so (iii) part (c) applies. Finally, if there is a 
different pair of consecutive ones in $k$, it is easy to see that none of the conditions in
(iii) apply, so $S(n,k)$ is not a AMZC.
\end{proof}

The parts of the next theorem can be found in the literature, e.g., in ([9] and [12]).
It is included here as an excellent example of our estimates and cases.

\begin{theorem}
Let $c \ge 3$ be odd, $n=c2^h$, and $1 \le k \le 2^{h+1}$. Then
\begin{itemize}
\item[(i)] If $k \le 2^h$ then $S(n,k)$ is a AMZC and $\nu(S(n,k)) = \sigma(k)-1$.
(Lengyel's extension of De Wannemacker's theorem.)
\item[(ii)] If $2^h < k < 2^{h+1}$ and $k=2^h+a$, then the almost minimum zero 
estimate is $\nu(S(n,k)) = \nu(S(n,2^h+a)) \ge \sigma(a) = \sigma(k)-1$.
\item[(iii)] If $k=2^h+a$ with $0<a<2^h -1$, so $k <2^{h+1}-1$, then $S(n,k)$
is not a AMZC, so $\nu(S(n,k)) \ge \sigma(a)+1 = \sigma(k)$, while if $a=2^h-1$,
so that $k = 2^{h+1}-1$, then $S(n,k)$ is a AMZC , and $\nu(S(n,k)) = \sigma(a)=h$.
\item[(iv)] If $a=2^h$, i.e., $k=2^{h+1}$, then $S(n,k)$ is a AMZC and SAMZC,
and $\nu(S(n,k))=0$.
\end{itemize}
\end{theorem}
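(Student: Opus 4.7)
The plan is to apply the criteria of Theorem 3.3 to each subcase, after using the decomposition $n - k = (c - c')2^h + r$ (with $c' \in \{1,2\}$ and $0 \le r < 2^h$) together with the fact that $n = c2^h$ has no bits below position $h$. This makes the bit sets $[n]$, $[n-k]$, and $[n-1]$ decouple into a ``high part'' at positions $\ge h$, governed by $[c]$, $[c-1]$, or $[c-2]$, and a ``low part'' at positions $< h$, governed by $r$ or $2^h - 1$. All intersection counts and carry counts thereby reduce to statements about $c$ and its small perturbations. The key identities I use are $[c-1] = [c]\setminus\{1\}$ (since $c$ is odd), $[c-2] = ([c]\setminus\{2^j\}) \cup \{2,\ldots,2^{j-1}\}$ with $j = \nu(c-1)$, and the resulting counts $\#([c]\cap[c-1]) = \#([c]\cap[c-2]) = \sigma(c) - 1$ and $\#([c-1]\cap[c-2]) = \sigma(c) - 2$.

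For (i), $n-k = (c-1)2^h + (2^h - k)$ gives $\#([n]\cap[n-k]) = \sigma(c) - 1$, so (3.3) yields $\sigma(k) - 1$; criterion (iii)(a) holds because the low part of $n + (n-k)$ does not carry into position $h$ and $c + (c-1)$ has only forced carries (bit $0$ of $c-1$ is zero). For (ii), $n-k = (c-2)2^h + (2^h - a)$ with $c-2$ odd again gives $\#([n]\cap[n-k]) = \sigma(c) - 1$, so (3.3) produces $\sigma(a) = \sigma(k) - 1$. For (iv), $n-k = (c-2)2^h$: the same intersection count and a parallel computation give $\#([n-1]\cap[n-k]) = \sigma(c) - 2$, so both (3.3) and (3.4) evaluate to $0$; criterion (iii)(b) gives the AMZC since $\nu(n) = h = \nu(n-k)$ and the carries of $n + (n-k-1) = (2c-2)2^h - 1$ count out to exactly $\sigma(c) - 2 = \#([n]\cap[n-k]) - 1$, while sharpness of (3.4) against $\nu(S(n,k)) = 0$ gives the SAMZC in the paper's generalized sense.

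The heart of the proof is (iii), which splits on whether $a = 2^h - 1$. At $a = 2^h - 1$ I verify criterion (iii)(c): $n - k = (c-2)2^h + 1$ is odd; the least positive exponent of both $[n]$ and $[n-k]$ is $h$; and the total carry count of $n + (n-k-2) = (2c-2)2^h - 1$ equals $\sigma(c) - 2$, which matches the forced-carry count $\#([n] \cap [n-k-2])$, so there are no unforced carries. For $0 < a < 2^h - 1$, I rule out all three subcriteria: (a) fails because $c + (c-2)$ always has an unforced carry at position $1$ (the forced carry at position $0$ propagates into bits whose values are complementary, as subtracting $2$ from an odd $c$ flips the parity of bit $1$); (b) fails because $\nu(n-k) = \nu(2^h - a) = \nu(a) < h = \nu(n)$; (c) fails since either $a$ is even (so $n-k$ is even) or $a$ is odd but the least positive exponent of $[n-k]$ coincides with the smallest nonzero-bit position of $2^h - a$, which lies strictly below $h$ whenever $2^h - a > 1$. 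Hence $S(n,k)$ is not an AMZC and the estimate of (ii) is strict by at least $1$. The chief obstacle throughout is the carry bookkeeping, especially the dichotomy that $c + (c-2)$ has an unforced carry at position $1$ (needed to rule out (iii)(a) in parts (iii) and (iv)), while the companion additions arising after the $-1$ and $-2$ shifts in the upper bits yield only forced carries.
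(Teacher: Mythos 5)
Your proof is correct and follows essentially the same route as the paper's: split $n-k$ into a high part governed by $c-1$ or $c-2$ and a low part below position $h$, compute $\#([n]\cap[n-k])=\sigma(c)-1$ to obtain the almost minimum zero estimate, and then test the three partitions of Theorem 3.3(iii) with the same carry bookkeeping for $c+(c-1)$, $c+(c-2)$ and $c+(c-3)$. Two minor remarks: your observation that $c+(c-1)$ has only \emph{forced} carries is the correct reading of the paper's (apparently typographically garbled) claim that they are ``all unforced,'' and your shortcut for the SAMZC in (iv) --- deducing sharpness of the shifted estimate directly from $\nu(S(n,k))=0$ --- is a harmless streamlining of the paper's explicit partition check.
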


\begin{proof}
For (i), we have $n-k=(c-1)2^h+2^h-k$, so $\#([n-k]\cap [n])=\sigma(c)-1$. For the
sum $n-k+n$, the carries are the same as for $c-1+c$, which are all unforced. Also
$\nu(n-k) \ne \nu(n)=h$, and the smallest positive exponent in $n-k$ is not equal to
the smallest positive exponent in $n$. Thus $S(n,k)$ is a AMZC with $\nu(S(n,k)) = 
\sigma(k)-\sigma(n)+\sigma(c)-1=\sigma(k)-\sigma(c)+\sigma(c)-1=\sigma(k)-1$.

For (ii), if $k=2^h+a$ with $0< a<2^h$, then $\sigma(k)-\sigma(n)=1+\sigma(a)
-\sigma(c)$ and if $\alpha =\nu(c-1)$ and $T=c-1$, then $n-k=(T-2^\alpha)2^h
+(2^{\alpha+h}-2^h) +2^h -k = (T-2^\alpha)2^h + 2^{\alpha+h-1} + \cdots+2^h +2^h-a$,
so $[n-k] \cap [n] = [T] - \{2^{\alpha +h}\} \cup \{2^h\}$ and $\#([n-k] \cap [n])=
\sigma(c)-1$. Thus the minimum zero estimate for $S(n,k)$ is $1+\sigma(a)-\sigma(c)
+\sigma(c)-1 = \sigma(a)=\sigma(k)-1$.

For (iii), if $0<a<2^h-1$ then $n-k+n$ has an unforced carry for exponent $\alpha$,
and the other partitions in Theorem 3.3(iii) are also not valid, so $S(n,k)$ is not a AMZC,
and $\nu(S(n,k)) \ge \sigma(a)+1 = \sigma(k)$. If $a=2^h-1$, so $k=2^{h+1}-1$, it is 
easy to verify that the first two partitions in Theorem 3.3(iii) still fail to meet the
conditions, but the third
partition, when $u_1=n-k-3$ and $u_3=1$ now works, so $S(n,k)$ is a AMZC, and
$\nu(S(n,k)) = \sigma(a)=\sigma(2^h-1)=h=\sigma(k)-1$. 

For (iv), if $a=2^h$, i.e., $k=2^{h+1}$, we now have $n-k=(T-2^\alpha)2^h+
2^{\alpha+h-1} + \cdots+ 2^h$, so the partitions of type (a) and (c) in Theorem 3.3(iii)
now fail, but the partition of type (b) where $u_1=n-k-1$ works, so $S(n,k)$ is a AMZC
and $\nu(S(n,k)) = \sigma(k)-\sigma(n)+\#([n] \cap [n-k]) = 1-\sigma(c)+\sigma(c)-1=0$.
Finally, the shifted minimum zero estimate is $\nu(S(n,k)) \ge \sigma(k-1)-\sigma(n-1)
+\#([n-1] \cap [n-k]) = \sigma(k)-1+\nu(k)-(\sigma(n)-1+\nu(n)) + \sigma(c)-2=0$,
and now the partition of type (a) works and the partition of type (c) doesn't (since
$n-k$ is even), so $S(n,k)$ is a SAMZC.
\end{proof}

The following theorem can be easily proved using the criteria for the cases, so will not
give the proof. It does show that Lengyel's extension of De Wannemacker's Theorem
follows formally from DeWannemacker's Theorem.

\begin{theorem} (Invariance)
Suppose $\Delta>0$ and all $2$-powers in $\Delta$ are greater than all $2$-powers in $n$. 
Then
\begin{itemize}
\item[(a)] for all four estimates, the estimate for $\nu(S(n,k))$ is the same as the 
estimate for $\nu(S(n+\Delta,k))$ and also the same for $\nu(S(n+\Delta,k+\Delta))$.
\item[(b)] $S(n+\Delta,k)$ is a AMZC iff $S(n,k)$ is a AMZC or MZC, and if any
of the cases hold, then $\nu(S(n+\Delta,k))=\nu(S(n,k))$.
\item[(c)] The same results hold if we replace cases by their shifts.
\item[(d)] If $\nu(\Delta) > \lfloor \log_2(n) \rfloor +1$, then $S(n+\Delta,k+\Delta)$
is a AMZC if $S(n,k)$ is a MZC or AMZC. Similarly for the the shifts. For all of these cases,
we have $\nu(S(n+\Delta,k+\Delta))=\nu(S(n,k))$.
\end{itemize}
\end{theorem}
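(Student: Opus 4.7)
The plan is to reduce the entire statement to the case criteria of Theorem 3.3 via the bit-disjointness forced by the hypothesis. Let $M=\lfloor\log_2 n\rfloor$, so $\max([n])=2^M$ and the standing hypothesis reads $\nu(\Delta)\ge M+1$; since $n-1,k,n-k\le n$ all have largest $2$-power at most $2^M$, we get the disjoint unions $[n+\Delta]=[n]\cup[\Delta]$, $[n+\Delta-1]=[n-1]\cup[\Delta]$, $[k+\Delta]=[k]\cup[\Delta]$, and $[n+\Delta-k]=[n-k]\cup[\Delta]$, so $\sigma$ is additive on each and $\nu(n+\Delta)=\nu(n)$, etc. Part (a) is then immediate: in each of the four estimates, the contribution $\sigma(\Delta)$ coming from the $\sigma$-difference cancels the equal contribution $\sigma(\Delta)$ to $\#([\cdot]\cap[\cdot])$ when both sides of the intersection pick up $[\Delta]$, and is absent when only one does.

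For part (b), the MZC criterion of Theorem 3.3(i) fails for $(n+\Delta,k)$ since the intersection contains $[\Delta]$, so $(n+\Delta,k)$ can only be broadly AMZC. The key computation is that the binomials in Theorem 3.3(iii) have numerators $2n-k+2\Delta$, $2n-k-1+2\Delta$, $2n-k-2+2\Delta$; since $\max([2n-k])\le 2^{M+1}$ while $\min([2\Delta])\ge 2^{M+2}$, formula (2.2) gives $\nu\binom{2n-k+2\Delta}{n+\Delta}=\nu\binom{2n-k}{n}+\sigma(\Delta)$, and similarly for the shifted numerators. Combined with $\#([n+\Delta]\cap[n+\Delta-k])=\#([n]\cap[n-k])+\sigma(\Delta)$, the $\sigma(\Delta)$'s cancel, so each of (iii)(a), (b), (c) for $(n+\Delta,k)$ is equivalent to the corresponding condition for $(n,k)$. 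Noting that (iii)(a) with empty intersection is precisely the MZC of $(n,k)$ yields the ``iff'' of (b), and the equality of valuations follows from (a). Part (c) repeats the argument with $n-1$, $k-1$ in place of $n$, $k$, using the shifted variant (iv) of Theorem 3.3.

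Part (d) differs because $(n+\Delta)-(k+\Delta)=n-k$, so the binomial numerators in Theorem 3.3(iii), (iv) contain only a single copy of $\Delta$, e.g.\ $2n-k+\Delta$ rather than $2n-k+2\Delta$. Since $\max([2n-k])$ can equal $2^{M+1}$, we now need $\min([\Delta])\ge 2^{M+2}$, i.e.\ $\nu(\Delta)>\lfloor\log_2 n\rfloor+1$, to keep the bits of $2n-k$ (and those of $2n-k-1$, $2n-k-2$ and their shifted analogues) disjoint from those of $\Delta$. Under this stronger hypothesis the same cancellation goes through and the MZC/AMZC of $(n,k)$---and their shifts---transfer to $(n+\Delta,k+\Delta)$ with equal valuation via (a). The main obstacle is only the bookkeeping needed to see why (d) requires $\nu(\Delta)>\lfloor\log_2 n\rfloor+1$ rather than the weaker bound $\nu(\Delta)>\lfloor\log_2 n\rfloor$ available for the other parts: this comes down precisely to the single rather than double occurrence of $\Delta$ in the binomial numerators.
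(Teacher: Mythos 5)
The paper does not actually print a proof of this theorem: it says only that the result ``can be easily proved using the criteria for the cases,'' and the remark following the statement explains that the stronger hypothesis in (d) is there to preserve the no-unforced-carries condition when passing from $(n,k)$ to $(n+\Delta,k+\Delta)$. Your proof is exactly the argument the author is gesturing at: you exploit the disjoint unions $[n+\Delta]=[n]\cup[\Delta]$, etc., to show that each of the numerical conditions in Theorem 3.3 (iii) and (iv) transfers between $(n,k)$ and the shifted pairs, with the additive $\sigma(\Delta)$ terms cancelling against the $\sigma(\Delta)$ gained by the intersections. Your treatment of (b)--(d) is correct, including the observation that a MZC forces condition (iii)(a) to hold vacuously (no forced carries implies no carries at all), which is what makes the ``AMZC or MZC'' disjunction come out right, and your accounting of why (d) needs $\nu(\Delta)>\lfloor\log_2 n\rfloor+1$ (a single rather than doubled copy of $\Delta$ in the binomial numerator, against $\max([2n-k])$ possibly equal to $2^{M+1}$) matches the paper's remark precisely.

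One caution about part (a). Your cancellation argument is valid for the two \emph{almost} minimum zero estimates, where the $-\sigma(\Delta)$ from $\sigma(n+\Delta)$ (or $\sigma(n+\Delta-1)$) is offset by the $+\sigma(\Delta)$ acquired by $\#([n+\Delta]\cap[n+\Delta-k])$. But for the plain minimum zero and shifted minimum zero estimates applied to $(n+\Delta,k)$ there is no intersection term to absorb it: the estimate becomes $\sigma(k)-\sigma(n)-\sigma(\Delta)$, which is \emph{not} equal to $\sigma(k)-\sigma(n)$ unless $\Delta=0$. So the literal claim ``is absent when only one does'' does not rescue those two cases, and part (a) as stated holds for all four estimates only under the diagonal substitution $(n,k)\mapsto(n+\Delta,k+\Delta)$. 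This appears to be an imprecision in the theorem statement itself rather than a substantive flaw in your argument, but a complete write-up should either restrict (a) for the substitution $(n,k)\mapsto(n+\Delta,k)$ to the two almost minimum zero estimates or note the discrepancy explicitly.
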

~\\
Remark. The assumption in (d) gives a ``gap" in the $2$-powers between $n$ and $n+\Delta$.
This is necessary to preserve the no unforced carries conditiion as we pass from $(n,k)$
to $(n+\Delta,k+\Delta)$.

For reference purposes, we include the basic material about Stirling numbers of the
first kind $s(n,k)$:
\begin{align}
s(n,k)=\binom{n-1}{k-1}B_{n-k}^{(n)}
\end{align}
Thus by the recursive formula (6.3), we get
\begin{align}
s(n,k)=\binom{n}{k}B_{n-k}^{(n+1)}(1)
\end{align}
From the Appendix, the maximum pole of $B_{n-k}^{(n)}(x)$ is 
$\#([n-k]\cap [k-1])$, so we get the following four estimates:\\
Minimum zero estimate:
\begin{align}
\nu(s(n,k)) \ge \sigma(k-1)-\sigma(n-1)
\end{align}
Shifted minimum zero estimate:
\begin{align}
\nu(s(n,k)) \ge \sigma(k)-\sigma(n)
\end{align}
Almost minimum zero estimate:
\begin{align}
\nu(s(n,k)) \ge \sigma(k-1)-\sigma(n-1)+\#([n-k]-[k-1])
\end{align}
Shifted almost minimum zero estimate:
\begin{align}
\nu(s(n,k)) \ge \sigma(k)-\sigma(n) +\#([n-k]-[k])
\end{align}

\section{Proofs of  theorems 2.1-2.5}

\textit{Proof of Theorem 2.1}.
Let $n=c2^h$ and $k=(c-1)2^h+a$, with $1 \le a \le 2^h$. $n-k=2^h-a$, so $[n]\cap [n-k]
=\emptyset$, so $S(n,k)$ is a MZC and $\nu(S(n,k)) = \sigma(k)-\sigma(n)=\sigma(c-1)
+\sigma(a)-\sigma(n)=\sigma(a)-1$. (If $a=2^h$, the theorem is trivial.)

\vspace{-8mm}

\begin{flushright}
$\Box$
\end{flushright}

\pagebreak

~\\
\textit{Proof of Theorem 2.2}.
Let $n=c2^h$ and $k=b2^{h+1} +a$. Then $\sigma(k)-\sigma(n)=\sigma(a)+\sigma(b)
-\sigma(c)$, so by the almost minimum zero estimate, it will suffice to show that
$\#([n] \cap [n-k]) \ge \sigma(c)-\sigma(b)-1$. 

If $a \le 2^h$, then $n-k=(c-2b-1)2^h+2^h-a$,
so $[n] \cap [n-k]=[c2^h] \cap \left([(c-2b-1)2^h] \cup [2^h-a]\right) = [c2^h] \cap
[(c-2b-1)2^h] = [c] \cap [c-2b-1]$, so by Lemma 2.1, we have 
$\#([n] \cap [n-k]) \ge \sigma(c)-\sigma(2b+1)
=\sigma(c)-\sigma(b)-1$, which completes the proof in this case.

On the other hand if $a>2^h$, then $n-k=(c-2b-2)2^h +2^{h+1}-a$ and $0\le 2^{h+1}-a
<2^h$, so $[n-k] \cap [n] = [(c-2b-2)2^h] \cap [c2^h] =[c] \cap [c-2b-2]  =
([1] \cup [c-1]) \cap ([1] \cup [c-1-(2b+1)])$ so $\#([c] \cap [c-2b-2]) \ge 1+\sigma(c-1)
-\sigma(2b+1) = \sigma(c)-\sigma(b)-1$.

\vspace{-4mm}

\begin{flushright}
$\Box$
\end{flushright}

\vspace{-6mm}

~\\
\textit{Proof of Theorem 2.3}.
The proof of this theorem is similar to our Theorem (3.4). Let $n=c2^m+b2^{h+1}+2^h$ 
and $k=b2^{h+2}+a$, where $0<a<2^{h+1}$. First assume $0<a \le 2^h$. Then
$n-k=(c-1)2^m +2^m-b2^{h+1}+2^h-a$, so $[n-k] \cap [n] = ([(c-1)2^m] \cap [c2^m])
\cup ([b2^{h+1}] \cap [2^m-b2^{h+1}])$, so $\#([n-k] \cap [n]) = \sigma(c)-1+1=\sigma(c)$.
Thus the almost minimum zero estimate is $\nu(S(n,k)) \ge \sigma(b)+\sigma(a)-(\sigma(c)
+\sigma(b)+1) +\sigma(c)=\sigma(a)-1$. But $n+n-k$ has an unforced carry
for exponent $m$ and $\nu(n-k)
\ne \nu(n)$ and the first positive exponent in $n-k$ is not equal to the first positive exponent
in $n$, so $S(n,k)$ is not a AMZC, by the criteria.

Next assume $2^h<a<2^{h+1}$. Then $n-k =(c-1)2^m +2^m-(2b+1)2^h+2^{h+1}-a$,
so again $\#([n-k] \cap [n]) = \sigma(c)$. If $a<2^{h+1}-1$, then once more the three
partitions don't satisfy the AMZC criterion. Finally if $a=2^{h+1}-1$, so that $n-k=
(c-1)2^m+2^m-(2b+1)2^h+1$, then the partitions when $u_1=n-k$ and when $u_1=n-k-1$
fail the the criteria, but the partition when $u_1=n-k-3$ and $u_3=1$ does meet the
criteria. Hence $S(n,k)$ is a AMZC when $a=2^{h+1}-1$, and $\nu(S(n,k))=\sigma(a)-1=h$.

\vspace{-6mm}

\begin{flushright}
$\Box$
\end{flushright}

\vspace{-2mm}

Since Theorem 2.4 is a special case of the next one, we will not prove it.

~\\
\textit{Proof of Theorem 2.5}.
Let $n=c2^h+u$ and $1 \le k \le 2^h$ with $0 < u \le 2^{\nu(k)}$. Then $n-k=(c-1)2^h+u
+2^h-k$. 
Without loss of generality, we can assume $c$ is odd. Then $n-1=c2^h+u-1$ so $\sigma(k-1)
-\sigma(n-1)=\sigma(k-1)-(\sigma(c)+\sigma(u-1))=
\sigma(k)-1+\nu(k)-\sigma(c)-\sigma(u)+1-\nu(u)=\sigma(k)+
\nu(k)-\nu(u)-\sigma(c)-\sigma(u)$. Also $[n-1] \cap [n-k] = [(c-1)2^h]\cup ([u] \cap [u-1])$,
so $\#([n-1] \cap [n-k] = \sigma(c)-1+\sigma(u)-1$. Therefore, the shifted almost minimum zero
estimate if $u\ne k$ is
\begin{align}
\nu(S(n,k)) &\ge \sigma(k)+\nu(k)-\nu(u)-\sigma(c)-\sigma(u)+\sigma(c)+\sigma(u)-2 \notag\\
&= \nu(k)+\sigma(k)-\nu(u)-2 \notag
\end{align}
It remains to show that $S(n,k)$ is a SAMZC (sharp estimate) 
iff $u=1$, or $u$ is a positive even integer which is less 
than or equal to $2^{\nu(k)-1}$, or $u=1+2^{\nu(k)-1}$, or $u=2^{\nu(k)}<k$. 
But $n-k+n-1=((c-1)2^h+(2^h-k)+u)+(c2^h+u-1)$, which has
no unforced carry as long as $u \le 2^{\nu(k)-1}$. Thus we have a SAMZC 
(sharp estimate) iff the partition where
$u_1=n-k-3$ and $u_3=1$ fails the criterion. If $u$ is even then $n-k$ is even, so this
partition fails while if $u$ is odd and $u \ne 1$, then the criterion is met, so again we don't have a
sharp estimate unless $u=1+2^{\nu(k)-1}$. For all other $u$, the criterion for a sharp estimate 
fails. This proof illustrates the fact that precisely one of the 
partitions must satisfy the criterion for a sharp estimate.

It is easy to see that if $u=k$ so that $\sigma(k)=1$, then $\nu(S(c2^h+u,k))=0$.

\vspace{-6mm}

\begin{flushright}
$\Box$
\end{flushright}

\section{Proof of the asymptotic theorem 2.6}
~\\
\textit{Proof of Theorem 2.6}. We use the notations of the Appendix.
(i) First consider the case where $\nu(k) < \nu(n)$, so $\nu(n-k)=\nu(k)$ and $2^{\nu(k)} 
\notin [n]$. Let $w(u) \le n-k$. Then if $d=n-k-2^{\nu(k)}$, the number of carries for
$d+n$ is the same as the number of carries for $n-k+n$, so if $n-k \ge d \ge n-k-2^{\nu(k)}$
then $\nu\binom{d+n}{n} \ge \nu\binom{n-k+n}{n}$. Since $n-k-\nu(u) \ge 0$, with
equality iff $u_1=n-k$, it follows that $\nu(t_u) \ge \nu\binom{n-k+n}{n}$, with equality
iff $u_1=n-k$. Thus the single partition $u_1=n-k$ has the least $2$-adic value among all
these terms in this case.

If on the other hand $d < n-k-2^{\nu(k)}$ then $n-k-d > 2^{\nu(k)}$. If $h$ is such
that $2^{\nu(k)+h-1} \ge \nu\binom{n-k+n}{n}$, replace $(n,k)$ by $(2^hn,2^hk)$.
Then by Corollary 6.1, we have $n-k-\nu(u) > \nu\binom{n-k+n}{n}$, so again
$\nu(t_u) > \nu\binom{n-k+n}{n}$.
Therefore the single partition $u_1=n-k$ gives the least value if
$2^{\nu(k)+h-1} \ge \nu\binom{n-k+n}{n}$, and $\nu(S(2^h n,2^h k)) =\sigma(k)
-\sigma(n)+\nu\binom{n+n-k}{n}$.

(ii) The case where $\nu(n)=\nu(k)$ and $2^{\nu(n-k)} \in [k]$ is similar, namely in this
case $2^{\nu(n-k)} \notin [n]$, so we get the same value for $\nu(S(2^h n,2^h k))$ if
$2^{\nu(n-k)+h-1} \ge \nu\binom{n-k+n}{n}$.

(iii) If $\nu(k)>\nu(n)$, then $\nu(n-k)=\nu(n)$, and we consider $\nu(S(n,k))=
\nu\binom{n-1}{k-1} +\nu\left(B_{n-k}^{(-k+1)}(1)\right)$. Since $2^{\nu(n-k)}=
\text{min}([n])$, we now have $2^{\nu(n-k)} \notin [n-1]$, so essentially the same
argument shows that the term when $u_1=n-k$ is the single dominant term if
$2^{\nu(n)+h-1} \ge \nu\binom{n-1+n-k}{n-1}$ and $\nu(S(2^h n,2^h k)) = \sigma(k-1)
-\sigma(n-1)+\nu\binom{n-1+n-k}{n-1}$.

(iv) The final case, when $\nu(n)=\nu(k)$ and $2^{\nu(n-k)} \in [n]$ is slightly more
delicate. In this case $\nu(n) < \nu(n-k)$, and if $d> n-k -2^{\nu(n)} =n-k-2^{\nu(n-k)}+
2^{\nu(n-k)}-2^{\nu(n)}$ then $d=n-k-2^{\nu(n-k)}
+2^{\nu(n)} +\delta$, where $0< \delta< 2^{\nu(n)}$. Then $[\delta] \cap [n-1]
\ne \emptyset$, so if
we consider $d+n-1$, we get an unforced carry in power $2^{\nu(n)}$, which in turn
leads to an unforced carry in power $2^{\nu(n-k)}$. Thus $\nu\binom{d+n-1}{n-1}
\ge \nu\binom{n-k+n-1}{n-1}$. Finally, if $2^{\nu(n)+h-1} > \nu\binom{n-1+n-k}{n-1}$
then the terms when $d \le n-k-2^n$ have bigger value, so again $\nu(S(2^h n,2^h k))=
\sigma(k-1)-\sigma(n-1)+\nu\binom{n-1+n-k}{n-1}$. The partition when $u_1=n-k$
is again dominant.

\vspace{-4mm}

\begin{flushright}
$\Box$
\end{flushright}

\vspace{-2mm}

\begin{corollary} (Central Stirling numbers)
$\lim_{h \rightarrow\infty} \nu(S(2^{h+1}k,2^hk)) = \nu\binom{3k}{k}$ and
$\nu(S(2^{h+1}k,2^hk)) = \nu\binom{3k}{k}$ if $2^{h-1+\nu(k)} \ge \nu\binom{3k}{k}$.
\end{corollary}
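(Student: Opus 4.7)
The plan is to deduce this corollary as a direct specialization of Theorem 2.6. Rewrite
\[
S(2^{h+1}k,2^{h}k) = S(2^{h}\cdot(2k),2^{h}\cdot k),
\]
so we are in the setting of Theorem 2.6 with $n$ replaced by $2k$ and $k$ kept as $k$. First I would check which of the four cases of Theorem 2.6 applies: since $\nu(2k)=\nu(k)+1>\nu(k)$, we are squarely in the first case ``$\nu(k)<\nu(n)$'', with no tie-breaking to worry about.

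Next I would simplify the limit formula. With $n=2k$ we have $\sigma(2k)=\sigma(k)$, so $\sigma(k)-\sigma(n)=0$, and $\binom{n+n-k}{n}=\binom{3k}{2k}=\binom{3k}{k}$. Plugging in gives
\[
\lim_{h\to\infty}\nu\bigl(S(2^{h+1}k,2^{h}k)\bigr) = \nu\binom{3k}{k},
\]
as claimed. For the threshold at which the limit is attained, I would compute $\nu(n-k)=\nu(2k-k)=\nu(k)$, so the sufficient condition $2^{h-1+\nu(n-k)}\ge \nu\binom{n+n-k}{n}$ from Theorem 2.6 becomes exactly $2^{h-1+\nu(k)}\ge \nu\binom{3k}{k}$.

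There is essentially no obstacle here beyond verifying the substitutions; the only point requiring any care is confirming that we land in the first case of Theorem 2.6 (so that the $B_{n-k}^{(-k)}$ branch, rather than the shifted $B_{n-k}^{(-k+1)}(1)$ branch, governs the limit), and that the partition $u_{1}=n-k$ identified in the proof of Theorem 2.6 indeed remains dominant here; both are automatic from $\nu(2k)>\nu(k)$.
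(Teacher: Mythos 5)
Your proposal is correct and is exactly the paper's argument: the paper's proof simply states that the corollary ``follows immediately from the first case of the preceding theorem,'' and your substitution $n=2k$ (noting $\nu(2k)>\nu(k)$, $\sigma(2k)=\sigma(k)$, $\binom{3k}{2k}=\binom{3k}{k}$, and $\nu(n-k)=\nu(k)$) is precisely the verification that claim leaves implicit.
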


\begin{proof}
This follows immediately from the first case of the preceding theorem.
\end{proof}
~\\
Remark. Since $\binom{n+n-k}{n} = \frac{n+n-k}{n}\binom{n-1+n-k}{n-1}$, if
$\nu(k) < \nu(n)$ then $\nu\left(\frac{n+n-k}{n}\right) = \nu\left(\frac{k}{n}\right)
=\nu(k)-\nu(n)$ so $\sigma(k)-\sigma(n) +\nu\binom{n+n-k}{n} = \sigma(k-1)-
\sigma(n-1)+\nu\binom{n-1+n-k}{n-1}$. If $\nu(k)=\nu(n)$ then 
$\nu\left(\frac{n+n-k}{n}\right)=0$ and $\sigma(k-1)-\sigma(n-1)=\sigma(k)-\sigma(n)$, 
so again we get the same value for $\nu(S(2^h n, 2^h k))$. Thus
\[
\lim_{h\rightarrow\infty} \nu(S(2^h n,2^h k)) = \sigma(k-1)-\sigma(n-1)+
 \nu\binom{n-1+n-k}{n-1}
\]
in all cases, which can easily be shown to agree with Lengyel's limit.

\section{Appendix --- Higher order Bernoulli numbers and polynomials}

The higher order Bernoulli polynomials $B_n^{(l)}(x)$ are defined by
\begin{align}
\left(\frac{t}{e^t-1}\right)^l e^{tx} = \sum_{n=0}^\infty B_n^{(l)}(x)t^n/n!
\end{align}

In this paper we assume the order $l \in \mathbb{Z}$. If $x=0$, we get the higher
order Bernoulli numbers $B_n^{(l)}$, and we get the Appell property $B_n^{(l)}(x) =
\sum_{i=0}^n \binom{n}{i} B_{n-i}^{(l)} x^i$. This polynomial is rational, monic
and in $\mathbb{Q}[x]$.

These polynomials satisfy two recursions:
\begin{align}
\left(B_n^{(l)}(x)\right)' =nB_{n-1}^{(l)}(x) ~ \text{and} ~ \Delta(B_n^{(l)}(x))
 = B_n^{(l)}(x+1)-B_n^{(l)}(x) =nB_{n-1}^{(l-1)}(x)
\end{align} 
These recursions yield the recursive formula
\begin{align}
B_n^{(l)} = \frac{l}{l-n} B_n^{(l+1)}(1)
\end{align}

If $u=(u_1,u_2, \ldots)$ is a sequence of natural numbers eventually zero, we regard $u$
as a partition of the number $w=w(u)=\sum i u_i$, where $u_i$ is the multiplicity of
$i$ in the partition and $d=d(u)=\sum u_i$ is the number of summands.

There is an explicit representation of $B_n^{(l)}$ in terms of the partitions, namely
(cf. [1,2])
\begin{align}
B_n^{(l)} = (-1)^n n! \sum_{w \le n} t_u(l-n-1)
\end{align}
and also
\begin{align}
B_n^{(l)}(1) =(-1)^n n! \sum_{w=n} t_u(l-n)
\end{align}
where $t_u=t_u(s)=\binom{s}{d}\binom{d}{u}/\Lambda^u$, where $\binom{d}{u} = 
\binom{d}{u_1 u_2 \ldots}$
is a multinomial coefficient, $\Lambda^u = 2^{u_1}3^{u_2} \cdots$, and
$\nu(u)=\nu(\Lambda^u)=\sum u_i\nu(i+1)$.

There is a companion sequence $\tau_u = \tau_u(s) = (n)_w t_u$, where
$s=l-n-1$, which is important
for the study of the $B_n^{(l)}(x)$. In particular, the maximum pole of $B_n^{(l)}(x)$
is the maximum pole of $\{\tau_u(l-n-1) : w \le n\}$. In [1] we showed that for $p=2$
the maximum pole of $B_n^{(l)}(x)$ is  $\#\{2^i \in [n] : 2 \nmid \binom{l-n-1}{2^i}\}$.
We can use the same reduction method for $p=2$ as in the proof of ([1], Lemma 3.1)
to show that if $\tau_u$ has the maximum pole, then $u_i=0$ for all $i>1$, with the
possible exception $u_3=1$, i.e., $u$ is concentrated in places $1$ and $3$, with $u_3\le 1$:
If $i \ne 1,3$ and $u_i\ne 0$ or if $i=3$ and $u_i \ge 2$, delete $u_i$ and increase
$u_1$ by $u_i$. (We call this a transfer from place $i$ to place $1$.) This preserves $d$
and decreases $w$. It is easy to see that this also decreases $\nu(\tau_u)$, so is 
impossible if $\tau_u$ has the maximum pole. 

Since $n! t_u =(n-w)! \tau_u$, we see that $n! t_u$ has the maximum pole of $B_n^{(l)}(x)$
iff $\tau_u$ has the maximum pole and $w=n-1$ or $w=n$.

For our application to Stirling numbers of the second kind, we replace $n$ by $n-k$ and $l$
by $-k$. It follows that the maximum pole is $\#([n-k]-[n])$, and by our analysis (cf. [1]), the
first pole has order one, and occurs in codegree of the smallest element of $[n-k]-[n]$,
etc. That is how we get the Newton polygon of the higher order Bernoulli polynomial,
which is particularly simple, the poles coming in increasing order without gaps (cf. [3]).
Newton polygons are used in [10] in a different way.

Furthermore from our analysis of the possible maximum pole terms, we can show that 
$B_{n-k}^{(-k)}$ has the maximum pole iff precisely one of the following terms gives
the maximum pole:
\begin{align}
\text{(i)~} &u_1=n-k, \text{~so~} w=n-k=d \text{~and~}\notag\\
 &\qquad t_u=(-1)^{n-k} \binom{n+n-k}{n}/2^{n-k}\\
\text{(ii)~} &u_1 =n-k-1, \text{~so~} w=n-k-1=d \text{~and~}\notag\\
&\qquad t_u=(-1)^{n-k-1} \binom{n+n-k-1}{n}/2^{n-k-1} \\
\text{or (iii)~} &u_1=n-k-3 \text{~and~} u_3=1, \text{~so~} w=n-k \text{~and~} d=n-k-2
\notag\\
&\qquad \text{~~~and~} n-k \text{~is odd and greater than 1 and}\notag\\
&\qquad t_u=(-1)^{n-k} \binom{n+n-k-2}{n}(n-k-2)/2^{n-k-1}
\end{align}
~\\
Remark. These three partitions are the ones that determine the mod $4$ congruence for
$2^{n-k} B_{n-k}^{(-k)}/(n-k)!$. The a priori possible term with $u_1=n-k-4$ and
$u_3=1$ is eliminated in the proof of Theorem 3.3.

Finally, we give a new estimate that is very useful for our asymptotic analysis.

\begin{theorem}
Let $w \le n$. Then $n- \nu(u) \ge n-w +(w-d)/2$ and $n-\nu(u) =(w-d)/2$ iff $n=w$ and 
$u$ is concentrated in places $1$ and $3$.  
\end{theorem}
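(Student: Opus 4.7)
The plan is to reduce the claim to a pointwise inequality on each part of the partition. Since $\nu(u) = \sum_i u_i \nu(i+1)$, $w = \sum_i i u_i$, and $d = \sum_i u_i$, the inequality $n - \nu(u) \ge (n-w) + (w-d)/2$ is equivalent (after using the hypothesis $n \ge w$ to absorb the $n - w$ term) to the pure statement
\[
\nu(u) \;\le\; \frac{w+d}{2} \;=\; \sum_i u_i \,\frac{i+1}{2}.
\]
Therefore I would try to prove the pointwise bound $\nu(i+1) \le (i+1)/2$ for every $i \ge 1$, because then summing against the multiplicities $u_i$ gives the desired global inequality.

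The key lemma to establish first is: for every integer $m \ge 2$, $\nu(m) \le m/2$, with equality iff $m \in \{2,4\}$. I would prove it by a short case split. If $m$ is odd, $\nu(m) = 0 < m/2$. Otherwise write $m = 2^a b$ with $b$ odd and $a \ge 1$; then $m/2 = 2^{a-1} b$. If $b \ge 3$, then $m/2 \ge 3 \cdot 2^{a-1}$, which strictly exceeds $a$ for all $a \ge 1$ by an easy induction, so the inequality is strict. If $b = 1$, the inequality becomes $a \le 2^{a-1}$, which holds with equality precisely at $a = 1$ and $a = 2$, i.e., $m = 2$ and $m = 4$, and is strict for $a \ge 3$.

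Translating back via $m = i + 1$, equality $\nu(i+1) = (i+1)/2$ occurs iff $i = 1$ or $i = 3$. Summing, $\nu(u) \le (w+d)/2$ with equality iff $u_i = 0$ for all $i \notin \{1,3\}$, i.e., iff $u$ is concentrated in places $1$ and $3$ (for such $u$ one checks directly that $\nu(u) = u_1 + 2u_3 = (w+d)/2$). Finally, equality in the displayed bound $n - \nu(u) \ge (w-d)/2$ forces simultaneously $n - w = 0$ and $\nu(u) = (w+d)/2$, yielding exactly the asserted characterization: $n = w$ together with $u$ supported on places $1$ and $3$.

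The only step with any real content is the pointwise lemma, and its only subtlety is noticing that $m = 4$ is the second equality case — this is what singles out place $3$ and dovetails with the role played by partitions of the form $(u_1, 0, u_3, 0, \dots)$ throughout the rest of the paper; everything else is routine bookkeeping with the definitions of $w$, $d$, and $\nu(u)$.
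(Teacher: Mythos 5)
Your proof is correct and follows essentially the same route as the paper's: both reduce the claim to the pointwise inequality $2\nu(j)\le j$ for $j=i+1\ge 2$, with equality exactly at $j=2$ and $j=4$, and then sum against the multiplicities $u_i$. The only difference is cosmetic --- you write the reduction as $\nu(u)\le (w+d)/2$ rather than $w-\nu(u)\ge (w-d)/2$, and you supply the short case analysis that the paper dismisses as ``easy to see.''
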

\begin{proof}
Since $n-\nu(u)= n-w+w-\nu(u)$,
it will suffice to prove that $w-\nu(u) \ge (w-d)/2$, with equality iff $u$ is concentrated
in places $1$ and $3$, i.e., we can assume $w=n$. But $w-\nu(u)-(w-d)/2 = 
\sum u_i (i-\nu(i+1) -(i-1)/2) = \frac{1}{2} \sum u_i(i+1-2\nu(i+1)) = \frac{1}{2}
\sum u_i(j -2\nu(j))$, where $j=i+1$. But if $j>0$, it is easy to see that $j \ge 2\nu(j)$ with
equality iff $j=2$ or $j=4$.
\end{proof}

\begin{corollary}
If $w \le n$ then $n-\nu(u) \ge (n-d)/2$.
\end{corollary}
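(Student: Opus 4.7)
The plan is to derive Corollary 6.2 as an immediate algebraic consequence of Theorem 6.1, using only the hypothesis $w \le n$. Since Theorem 6.1 already establishes the sharper bound $n - \nu(u) \ge n - w + (w-d)/2$, the corollary should follow by a single manipulation, with no combinatorial content beyond what was done in the theorem.

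First I would rewrite the right-hand side of the bound in Theorem 6.1 to put $n$ and $w$ on equal footing:
\[
n - w + \frac{w - d}{2} = \frac{2n - w - d}{2} = \frac{n - d}{2} + \frac{n - w}{2}.
\]
The hypothesis $w \le n$ makes the last term $(n-w)/2$ nonnegative, and chaining this with Theorem 6.1 immediately yields
\[
n - \nu(u) \;\ge\; \frac{n - d}{2} + \frac{n - w}{2} \;\ge\; \frac{n - d}{2},
\]
which is the desired inequality.

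There is no real obstacle here: the whole point of Theorem 6.1 is that it separates the slack $n - w$ from the intrinsic partition contribution $(w-d)/2$, and the corollary simply discards the former. I would present the argument in two lines, noting for completeness that equality can only occur when $n = w$ (forcing the discarded slack to vanish) and $u$ is concentrated in places $1$ and $3$ (by the equality clause of Theorem 6.1), though the corollary itself does not require tracking the equality case.
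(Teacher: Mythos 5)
Your proof is correct and matches the paper's intent exactly: the paper states the corollary without proof precisely because it follows from Theorem 6.1 by the one-line rearrangement $n-w+(w-d)/2=(n-d)/2+(n-w)/2\ge (n-d)/2$ that you give. Nothing further is needed.
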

~\\
Note: For our applications we will often only have an estimate for $n-d$, so this is
how typically we will use Theorem 6.1.
~\\
\centerline{ACKNOWLEDGEMENTS}
~\\
The author would like to thank E.  A. Herman for his invaluable help in preparing
this paper, and T. Lengyl for his generous advice and encouragement throughout its
development.
~\\
\centerline{REFERENCES}
~\\
1. A. Adelberg, On the degrees of irreducible factors of higher order Bernoulli 
polynomials, \textit{Acta Arith.} \textbf{62} (1992), 329-342.\\
2. A. Adelberg, Congruences of $p$-adic integer order Bernoulli numbers,
\textit{J. Number Theory} \textbf{59} No. 2 (1996), 374-388.\\
3. A. Adelberg, Higher order Bernoulli polynomials and Newton polygons,
G. E. Bergum et al (eds.), \textit{Applications of Fibonacci Numbers} \textbf{7}
(1998), 1-8.\\
4. A. Adelberg, The $p$-adic analysis of Stirling numbers via higher order
Bernoulli numbers, \textit{Int. J. Number Theory} \textbf{14} (2018), No. 10,
2767-2779.\\
5. T. Amdeberhan, D. Manna and V. Moll, The $2$-adic valuation of Stirling
numbers, \textit{Experimental Math.} \textbf{17} (2008), 69-82.\\
6. O-Y. Chan and D. Manna, Divisibility properties of Stirling numbers of the
second kind, Proceedings of the Conference on Experimental Math., T. 
Amdeberhan, L. A. Medina, and V. Moll eds., \textit{Experimental Math.}
(2009).\\
7. S. De Wannemacker, On $2$-adic orders of Stirling numbers of the second
kind, \textit{Integers Electronic Journal of Combinatorial Number Theory},
\textbf{5} (1) (2005), A21, 7 pp. (electronic).\\
8. S. Hong J. Zhao, and W. Zhao, The $2$-adic valuations of Stirling numbers of
the second kind, \textit{Int. J. Number Theory} \textbf{8} (2012), 1057-1066.\\
9. S. Hong, J. Zhao, and W. Zhao, Divisibility by $2$ of Stirling numbers of the
second kind and their differences, \textit{J. Number Theory} \textbf{140}
(2014), 324-348.\\
10. T. Komatsu and P. T. Young, Exact $p$-adic valuations of Stirling numbers
of the first kind, \textit{J. Number Theory} \textbf{177} (2017), 20-27.\\ 
11. T. Lengyel, On the divisibility by $2$ of the Stirling numbers of the second
kind, \textit{Fibonacci Quart.} \textbf{32} (3) (1994), 194-201.\\
12. T. Lengyel, On the $2$-adic order of Stirling numbers of the second kind and
their differences, \textit{DMTCS Proc. AK} (2009), 561-572.\\
13. T. Lengyel, Alternative proofs on the $2$-adic order of Stirling numbers
of the second kind, \textit{Integers} \textbf{10} (2010), A38, 453-468.\\

\end{document}